\newtheorem{theorem}{Theorem}[section]
\newtheorem{proposition}[theorem]{Proposition}
\newtheorem{lemma}[theorem]{Lemma}
\newtheorem{corollary}[theorem]{Corollary}
\newtheorem{assumption}[theorem]{Assumption}
\newtheorem{remark}[theorem]{Remark}
\newcounter{example}
\theoremstyle{definition}
\newtheorem{example}[theorem]{Example}
\numberwithin{equation}{section}
\newcommand{\Vvert}{{\vert\kern-0.25ex\vert\kern-0.25ex\vert}}
\pgfplotsset{select coords between index/.style 2 args={
		x filter/.code={
			\ifnum\coordindex<#1\fi
			\ifnum\coordindex>#2\fi
		}
}}
\definecolor{revieworange}{RGB}{255,0,0}
\newcommand{\review}[1]{{\color{black}#1}}
\newcommand{\reviewtwo}[1]{{\color{black}#1}}
\newcommand{\reviewmath}[1]{\textcolor{black}{#1}}
\newcommand\revKU[1]{{\color{black}#1}}
\newcommand{\cL}{\mathcal{L}}
\newcommand{\cU}{\mathcal{U}}
\newcommand{\cV}{\mathcal{V}}
\newcommand{\cW}{\mathcal{W}}
\newcommand{\cX}{\mathcal{X}}
\newcommand{\cY}{\mathcal{Y}}
\newcommand{\cZ}{\mathcal{Z}}
\newcommand{\R}{\mathbb{R}}
\newcommand{\Ext}{E}
\newcommand{\Res}{R}
\newcommand{\aExt}{E'}
\newcommand{\aRes}{R'}
\newcommand{\EBox}{\Ext_{\Omega\rightarrow\square}}
\newcommand{\RBox}{\Res_{\Omega\leftarrow\square}}
\newcommand{\tRBox}{\tilde{\Res}_{\Omega\leftarrow\square}}
\newcommand{\atRBox}{\tilde{\Res}_{\Omega\rightarrow\square}'}
\newcommand{\aEBox}{\aExt_{\Omega\leftarrow\square}}
\newcommand{\aRBox}{\aRes_{\Omega\rightarrow\square}}
\newcommand{\ECirc}{\Ext_{\mycirc\rightarrow\Omega}}
\newcommand{\RCirc}{\Res_{\mycirc\leftarrow\Omega}}
\newcommand{\aECirc}{\aExt_{\mycirc\leftarrow\Omega}}
\newcommand{\aRCirc}{\aRes_{\mycirc\rightarrow\Omega}}
\newcommand{\EstBox}{\Ext_{I\times\Omega\rightarrow I\times\square}}
\newcommand{\RstBox}{\Res_{I\times\Omega\leftarrow I\times\square}}
\newcommand{\astRBox}{\aRes_{I\times\Omega\rightarrow I\times\square}}
\newcommand{\EstCirc}{\Ext_{I\times\mycirc\rightarrow I\times\Omega}}
\newcommand{\RstCirc}{\Res_{I\times\mycirc\leftarrow I\times\Omega}}
\newcommand{\astECirc}{\aExt_{I\times\mycirc\leftarrow I\times\Omega}}
\newcommand\redsout{\bgroup\markoverwith{\textcolor{red}{\rule[0.5ex]{2pt}{0.8pt}}}\ULon}
\newcommand{\mycirc}{\text{\Circle}}
\DeclareMathOperator{\id}{id}
\DeclarePairedDelimiter{\dual}{\langle}{\rangle}
\begin{document}

\title[A posteriori certification \review{of PDE} approximations]{A posteriori Certification \review{of} PDE \review{approximations with particular application to neural networks}}

\author{Lewin Ernst}
\address{Institute for Numerical Mathematics, Ulm University, Helmholtzstr. 20, 89081 Ulm, Germany}
\email{lewin.ernst@uni-ulm.de}

\author{Nikolaos Rekatsinas}
\address{Institute of Applied and Computational Mathematics, Foundation of Research and Technology, Nikolaou Plastira 100, Vassilika Vouton,
GR 70013 Heraklion, Crete, Greece}
\email{n.rekatsinas@iacm.forth.gr}

\author{Karsten Urban}
\address{Institute for Numerical Mathematics, Ulm University, Helmholtzstr. 20, 89081 Ulm, Germany}
\email{karsten.urban@uni-ulm.de}

\begin{abstract}
	We propose rigorous \revKU{and efficiently computable} lower and upper \revKU{a posteriori} error bounds for \revKU{given} approximations to PDEs \revKU{on a given domain, which might be geometrically complex. This is done} by embedding or enveloping the original domain towards geometrically simpler domains, enabling the use of fast numerical solvers. \revKU{To this end, we extend and restrict the residual and provide efficient methods to compute those Hahn-Banach extensions. Then, we efficiently compute their Riesz representations on the geometrically simpler domains and obtain the desired a posteriori bounds for which we prove that they are sharp. }

    The resulting bounds control the error in the natural norm induced by a well-posed variational formulation, require only minimal regularity assumptions, and thus remain applicable on complex geometries. The framework is detailed for elliptic as well as parabolic problems. Numerical experiments demonstrate the good quantitative behavior of the derived upper and lower error bounds. 

    \review{A central motivation for this paper comes from physics-informed and related neural-network approximations of PDEs, which are naturally mesh-free and can be evaluated pointwise on complex or parameter-dependent geometries. Nevertheless, the framework applies to any approximation for which the variational residual can be evaluated.} 
\end{abstract}

\keywords{
	A Posteriori Error Bound; Parameterized Partial Differential Equations; Extension and Restriction of Functionals; Physics Informed Neural Networks.}

\subjclass{35J20, 65M15, 68T07}

\maketitle

\section{Introduction}
\revKU{Partial differential equations (PDEs) are well-known to model a huge variety of processes. Accordingly, there is a huge literature of numerical methods for approximately solving PDEs. On the other hand, there are several \emph{black-box} solvers which produce an approximation for the solution of a PDE without full access to the mathematical structure behind. Examples include commercial solvers or the increasing use of neural networks (NNs) for solving PDEs. The aim of this paper is to provide rigorous and efficiently computable a posteriori bounds for the error of some given approximation.} In other words, the aim of this paper is to discuss the \emph{certification} of \revKU{given} approximations to PDEs.

\revKU{Such black box solvers are particularly attractive in cases where the underlying domain $\Omega\subset\R^d$ is geometrically complex so that one might want to avoid a triangulation e.g.\ for finite element or finite volume discretizations. Other scenarios include parameterized PDEs, where e.g.\ the domain is subject to parametric changes. 

In such cases, we do not want to perform any computations on the original domain. Instead, we suggest to embed or envelop $\Omega$ by geometrically simpler domains
\begin{align}\label{eq:Domains}
    \mycirc \subset \Omega \subset \square  \subset \R^d.
\end{align}
Assuming that the PDE is well-posed, it is well-known that the dual norm of the residual is a bound for the error. In order to avoid computing this norm on $\Omega$, we extend and restrict it to $\square$ and $\mycirc$ for an upper and lower bound, respectively. This is done by efficiently computing norm-preserving Hahn-Banach extensions of the residual. Having them at hand, we efficiently compute the Riesz-representations on $\square$ and $\mycirc$, so that their norms are the desired upper and lower bound. For that procedure to work, we only need access to point evaluations of the given approximation, which is typically provided by black box solvers as well by NN approximations including physics-informed neural networks (PINNs). 
}

\revKU{The} broad and continuous interest on the usage of \revKU{NNs} for the numerical approximation of solutions to \revKU{PDEs is in fact the main motivation for this paper, see} e.g.\ \cite{Dissanayake1994,Lagaris2000a,Lagaris2000b,Raissi2018a,Raissi2019}, where this list is far from being complete. For instance, \revKU{PINNs}, trained with loss functions related to the residual of a given PDE, are extensively used to solve PDEs in particular posed on complex-shaped or varying domains, as PINNs avoid complicated discretization techniques and thus keep the implementation effort low. On the other hand, however, a theory for the assessment of the approximation quality in terms of an a posteriori error control is at least not obvious, and relatively unexplored compared to the wide acceptance of \revKU{NN} solution methods for PDEs. 

\revKU{Our point of view is that we are just given some approximation. W.r.t.\ NNs and PINNs, in} particular, we want to avoid any assumption on the architecture, training process, or training data of the neural network solution as produced by e.g.~classical PINNs \cite{Raissi2018a,Raissi2019}, Variational PINNs (VPINNs)\cite{khaVPINNs,Berrone22}, Deep Ritz methods \cite{YuB}, Deep Galerkin methods \cite{SIRIGNANO20181339}, or related approaches. We view the NN as a given black box.

A common approach to assess the accuracy of a classical PINN solution, i.e.~a solution produced by minimizing the pointwise residual of the PDE, is to bound the approximation error by the generalization error, which measures the $L_2$-norm\review{\footnote{\review{As frequently used in the literature, we denote the space of Lebesgue square-integrable functions by $L_2$ in order to distinguish from a regularity index e.g.\ in $H^s$.}}} of the residual on the entire domain, not just the training points. Such bounds typically require additional regularity of the solution and control the error in norms that are not intrinsic to the underlying PDE. In particular, controlling the $L_2$-residual is, in general, not equivalent to controlling the error in the natural norm associated with the problem, especially on complex geometries. In contrast, our approach exploits the \emph{standard error–residual relation} in dual norms (see, e.g.,~\eqref{eq:ErrorResiLinearV2}) and yields bounds directly in the natural \emph{energy} norm of the problem under minimal regularity assumptions.

In \cite{10.1093/imanum/drab093}, the generalization error has been bounded in terms of the training error, the number of training samples and the quadrature error, up to constants stemming from stability (and regularity) estimates of the underlying PDE. Although this approach provides one of the first rigorous insights for the quality of PINN approximations, it concludes that a \enquote{low generalization error is limited to a low training error}, which at least is not straightforward to realize in practice. Hence, this approach is inherently theoretical rather than of practical use for certification.~PINNs that achieve an arbitrarily small total error are constructed and analyzed for a wide class of linear parabolic problems in \cite{RMS}, and for the incompressible Navier-Stokes equations in \cite{10.1093/imanum/drac085}. The authors in \cite{10.1093/imanum/drae081} provide a framework for obtaining a posteriori estimates of \revKU{NN} solutions under continuity and coercivity of the bilinear form associated to the PDE. For a range of linear PDEs, they bound the error by the $L_2$ residual loss value (after training) in a norm under which coercivity can be established, typically by interpolating between energy and dual estimates, and which is generally weaker than the natural PDE norm. The efficacy of these a posteriori bounds then relies on a \emph{low training error}, which they achieve for several linear PDEs using specialized PINN optimization methods as in \cite{pmlr-v202-muller23b}.  

Unlike generalization error approaches, the methods in \cite{Berrone22,ernst2024certified,opschoor2024,monsuur2024} provide PDE formulations that, in turn, define loss functionals suitable for \revKU{NN} training, and which enable the computation of numerical bounds on the actual error between the \revKU{NN} approximation and the true solution. \cite{9892569} yields certification of the error without a priori knowledge of the solution using arguments from semigroup theory. \cite{Berrone22} introduces a reliable and efficient error estimator for \revKU{VPINNs}. The bound for the error consists of a residual-type term, a loss function term and a term for the data oscillation. However, the \revKU{NN} approximation should follow the corresponding discretization of the PDE. In \cite{ernst2024certified}, rigorous a posteriori error control is constructed by connecting the residual to the approximation error with computable quantities based on a well-posed formulation of the PDE. This is achieved using wavelet expansions of the dual norm of the residual as the loss function to train the \revKU{NN}. In \cite{opschoor2024}, First-Order System Least Squares (FOSLS) formulations of PDEs serve both as a loss functional and an a posteriori error estimator, allowing the least squares residual to provide a computable upper bound on the approximation error. In similar spirit, the proposed least squares formulations for PDEs in \cite{monsuur2024} provide loss functionals that yield quasi-optimal approximations.

In \cite{bachmayr2024}, the authors develop a framework for analyzing \revKU{NN} training for learning solutions to parametric PDEs. They show that if the residual loss function is derived from a stable variational formulation of the PDE, then minimizing this loss yields controlled approximation error, meaning the loss remains proportional to the error measured in a norm induced by the problem.

\subsection{Contribution}
Viewing \revKU{an approximate solution} to a PDE as produced by a black box, we propose a framework to rigorously bound the approximation error from above and from below by quantities which can efficiently be computed a posteriori -- \revKU{in particular} independently of the choice of the loss functional or the configuration of training. \revKU{As briefly described above, this} is achieved in the following way:
\begin{compactenum}[i)] 
    \item starting by a well-posed (variational) formulation of the PDE on some domain $\Omega\subset\R^d$, we use a well-known error-residual relation;
    \item choose geometrically simple domains $\mycirc$ and $\square$ such that \revKU{\eqref{eq:Domains}}; 
    \item extend and restrict the residual to $\mycirc$ and $\square$ in a suitable manner;
    \item construct computable surrogates of the dual norm of the residual on $\mycirc$ and $\square$ and relate them to the residual posed on $\Omega$.
\end{compactenum}

\revKU{We do not make use of any discretization of the underlying PDE on $\Omega$ (as e.g.\ in \cite{Berrone22}), so that the derived bounds are valid for \emph{any} approximation. By} reducing the computations for the error bounds on simpler domains $\mycirc$ and $\square$, we can use highly efficient numerical schemes.

\subsection{Organization of material}
The structure of this paper is as follows.  In Section \ref{sec:VarFormPDEs}, we recall the variational framework for weak formulations and their practical relevance to residual-based a posteriori error control. In addition, we review the evaluation of dual norms. In Section \ref{sec:CertifyPINNs}, we introduce the theoretical framework and prove the lower and upper bounds for the error. The abstract setting is detailed for elliptic and parabolic problems. Numerical experiments are presented in Section \ref{sec:NumResults} to quantitatively investigate our numerical bounds. We conclude  with a brief summary and an outlook on potential directions for future research. \revKU{In Appendix \ref{sec:PINNs}, we briefly review NNs for solving PDEs.}

\section{Error-residual relations for PDEs} \label{sec:VarFormPDEs}
In this section, we recall the main facts on error-residual relations for PDEs, which is based upon the well-known Hilbert space theory of PDEs yielding well-posed formulations.

\subsection{Well-posedness}\label{sec:wellposed}
Given a partial differential operator $B$, we require Hilbert spaces $\cW$, $\cY$ with norms $\|\cdot\|_\cX$ induced by inner products $(\cdot,\cdot)_\cX$, $\cX\in\{\cW,\cY\}$, such that  the PDE
\begin{equation} \label{eq:generalPDE}
	B u  = f
\end{equation}
is \emph{well-posed} for any appropriate right-hand side $f$, by which we mean that \eqref{eq:generalPDE} admits a unique solution, which continuously depends on the data (typically the right-hand side $f$). In order to ensure this, the domain $\cW$ and the range $\cY'$ of the operator $B$ have to be identified such that $B \in \cL_{\text{is}}(\cW, \cY')$\footnote{The space $\cL_{\text{is}}(\cW, \cY')$ is the subspace of isomorphisms from $\cL(\cW, \cY')$, where $(\cL(\cW, \cY'), \Vert \cdot \Vert_{L(\cW, \cY')})$ denotes the space of continuous linear functions from $\cW$ to $\cY'$.}, where $\cY'$ is the topological dual space of $\cY$ equipped with the operator norm 
\begin{equation}\label{eq:Opnorm}
	\Vert f \Vert_{\cY'} 
    := \sup_{v \in \cY} \frac{f(v)}{\Vert v \Vert_{\cY}}
    = \sup_{v \in \cY} \frac{\langle f, v\rangle_\cY}{\Vert v \Vert_{\cY}},
    \end{equation} 
i.e.,  $\langle \cdot,\cdot\rangle_\cY\equiv \langle \cdot,\cdot\rangle_{\cY'\times\cY}$ denotes the dual pairing. The reason why we choose the dual $\cY'$ for the range of $B$ lies in the fact that this easily allows one to relate the differential operator to a bilinear form
\begin{align*}
    b:\cW\times\cY\to\mathbb{R}
    \quad\text{via}\quad
    b(w,y) := \langle Bw,y\rangle_{\cY},
    \,\, w\in\cW, y\in\cY.
\end{align*}
 If $B\in\cL_{\text{is}}(\cW, \cY')$, the operator is bijective, which ensures existence and uniqueness for all $f\in\cY'$. Moreover, the inverse is bounded, i.e., 
\begin{align*}
    \Vert B^{-1} \Vert_{\cL(\cY', \cW)} < \infty,
\end{align*}
which will be relevant next.

Assume that we are given some approximation \revKU{$u^\theta$} of the (exact, but typically unknown) solution $u\in\cW$, e.g.\ determined by a \revKU{black box solver or a NN}. Then, we are interested in controlling the error $\Vert u - \revKU{u^\theta} \Vert_{\cW}$. If $B \in \cL_{\text{is}}(\cW, \cY')$, one can bound the error by the dual norm of the \emph{residual} $r(\revKU{u^\theta}) := f - B\revKU{u^\theta}\in\cY'$ as
\begin{equation} \label{eq:ErrorResiLinearV1}
	\Vert B \Vert^{-1}_{\cL(\cW, \cY')} \cdot \Vert \revKU{r(u^{\theta})} \Vert_{\cY'} 
    \,\le\, 
    \Vert u - \revKU{u^{\theta}} \Vert_{\cW} 
    \,\le\, 
    \Vert B^{-1} \Vert_{\cL(\cY', \cW)} \cdot 
    \Vert \revKU{r(u^{\theta})} \Vert_{\cY'}.
\end{equation}
The constants $c_B:=\Vert B \Vert_{\cL(\cW, \cY')}^{-1} >0 $ on the left and $C_B:=\Vert B^{-1} \Vert_{\cL(\cY', \cW)} <\infty$ on the right of \eqref{eq:ErrorResiLinearV1} are the (inverse of the) continuity and stability (inf-sup) constants, respectively. Then, \eqref{eq:ErrorResiLinearV1} reads
\begin{equation} \label{eq:ErrorResiLinearV2}
	c_B \, \Vert \revKU{r(u^{\theta})} \Vert_{\cY'} 
    \,\le\, 
    \Vert u - \revKU{u^{\theta}} \Vert_{\cW} 
    \,\le\, 
    C_B \, \Vert \revKU{r(u^{\theta})} \Vert_{\cY'}, \quad \forall \revKU{u^{\theta}} \in \cW.
\end{equation}
Hence, we can bound the error from above and from below by the residual, if we
\begin{compactitem}
    \item  either know or are able to compute or estimate the continuity and stability constants and
    \item are able to evaluate the dual norm of the residual, i.e., \revKU{$\Vert r(u^{\theta}) \Vert_{\cY'}$, \eqref{eq:dualnorm}.}
    However, the computation of the supremum is in general not possible.
\end{compactitem}

\subsection{PDEs on domains}
The above introduced abstract spaces $\cW$ and $\cY$ are typically function spaces which are defined upon a physical domain $\Omega \subset \mathbb{R}^d$, on which the PDE is posed. Hence, we sometimes use the notations $\cW(\Omega)$ and $\cY(\Omega)$, also for domains different from the original $\Omega$. Boundary and/or initial conditions are incorporated into the definition of the operator $B$. It will be important later to keep track on the domain $\Omega$.~\revKU{Hence, we write $B \in \cL_{\text{is}}(\cW(\Omega), \cY'(\Omega))$, and the residual reads
\begin{align}\label{eq:residual1}
    r_\Omega(u^\theta) := f - B u^\theta\in\cY'(\Omega).
\end{align}}
\begin{example}[Linear elliptic PDE]\label{example:linearPDE}
	Let $\Omega \subset \mathbb{R}^d$ be a Lipschitz domain and let $\revKU{\cW(\Omega)=\cY(\Omega)}=H^1_0(\Omega)$. Given $f\in H^{-1}(\Omega)$, the problem of finding $u \in H^1_0(\Omega)$ satisfying 
	\begin{equation*}
		\langle Bu,v\rangle_{H^1_0(\Omega)} := (A \nabla u, \nabla v )_{L_2(\Omega)} + (b \cdot \nabla u, v)_{L_2(\Omega)} + (c \cdot u, v)_{L_2(\Omega)} = f(v)
	\end{equation*}
	for all $ v \in H^1_0(\Omega)$ is well-posed, if $A \in \left(L_\infty(\Omega)\right)^{d \times d}$, $b \in \left(L_\infty(\Omega)\right)^{d}$, $c \in L_\infty(\Omega)$ and
	\begin{equation*}
		\nabla \cdot b \in L_2(\Omega), \quad c(x) 
        - \tfrac{1}{2} \nabla \cdot b(x) \ge c_0 \quad \forall x \in \Omega \; \text{ a.e. }
	\end{equation*}
	as well as 
	\begin{equation*}
		\xi^T A(x) \xi \ge a_0 \vert \xi \vert^2, \quad \forall \xi \in \mathbb{R}^{d} \; \forall x \in \Omega \; \text{ a.e.,}
	\end{equation*}	
    i.e., $A$ is \review{symmetric and positive definite}. 
    In this case \eqref{eq:ErrorResiLinearV2} holds with $c = (\Vert A \Vert_\infty + \reviewmath{C_{\text{P}}} \Vert b \Vert_\infty + \reviewmath{C_{\text{P}}}^2 \Vert c \Vert_\infty)^{-1}$ and $C = a_0^{-1}$, whereby $\reviewmath{C_{\text{P}}}$ is the Poincar\'{e} constant for $\Omega$.\hfill$\diamond$
\end{example}

\begin{example}[Space-time variational form of parabolic PDEs]\label{Ex:SpaceTime}
    Let the PDE operator $A\in\cL_{\text{is}}(W(\Omega),Y'(\Omega))$ be well-posed on some domain $\Omega\subset\mathbb{R}^d$ for appropriate function spaces $W(\Omega)$ and $Y(\Omega)$. If $A$ is elliptic, one has $W(\Omega)=Y(\Omega)=H^1_0(\Omega)$. Then, given $f\in\cY\revKU{(\Omega)}'$, with 
    \begin{align*}
    \cY\revKU{(\Omega)}'
    \kern-1pt:=L_2(I;Y'(\Omega))\kern-1pt:=\{ g:I\to Y'(\Omega): \| g\|_{L_2(I;Y'(\Omega))}^2:=\int_I \| g(t)\|_{Y'(\Omega)}^2\, dt<\infty\}, 
    \end{align*}
    we seek a solution $u\in\cW\revKU{(\Omega)}$ (with $\cW\revKU{(\Omega)}$ to be defined) of the evolution problem $$Bu:=\dot{u}+Au=f\quad \textit{in}\,\,\cY\revKU{(\Omega)}',\,\,\textit{and}\,\,u(0)=0.$$ It is well-known that the Bochner space $$\cW\revKU{(\Omega)}:=\{ w\in L_2(I;W(\Omega)): \dot{w}\in L_2(I;W'(\Omega)), w(0)=0\}$$ equipped with the graph norm $$\Vert u \Vert_{\cW\revKU{(\Omega)}}^2 := \Vert \partial_t u \Vert_{L_2(I,W'(\Omega))}^2 + \Vert u \Vert_{L_2(I,W(\Omega))}^2$$
    yields $B\in\cL_{\text{is}}(\cW\revKU{(\Omega)},\cY\revKU{(\Omega)}')$, \cite{lions2000mathematical,schwab2009space}. This is known as the space-time variational formulation of parabolic PDEs, see also \cite{CheginiStevenson,SpaceTimeUrbanPatera}. 
    \hfill$\diamond$
\end{example}

\subsection{\revKU{Residual dual} norm estimates}
We will now address the problem of finding a computable surrogate for the dual norm of the residual \revKU{\eqref{eq:residual1}} for well-posed PDE operator equations \revKU{as an a posteriori error estimator}. \revKU{Thus, we look for a feasible computable surrogate $\eta^\theta$ such that either}
\begin{subequations}\label{eq:dualnorm}
\begin{align}
	\revKU{\eta^\theta}  
        &=\Vert \revKU{r_\Omega(u^\theta)} \Vert_{\cY\revKU{(\Omega)}'}, \quad \text{or} 
        && \text{(identity)}
        \label{eq:dualnormEqual}\\
	\exists\, C,c >0:\qquad c \, \revKU{\eta^\theta}
        &\le \Vert \revKU{r_\Omega(u^\theta)} \Vert_{\cY\revKU{(\Omega)}'} 
        \le C \, \revKU{\eta^\theta}, \quad \text{or} 
        && \text{(estimate)}
        \label{eq:dualnormEst} \\
	\exists\, C > 0:\quad  \Vert \revKU{r_\Omega(u^\theta)} \Vert_{\cY\revKU{(\Omega)}'} 
        &\le C \, \revKU{\eta^\theta} 
        && \text{(bound)}
        \label{eq:dualnormUpEst}
\end{align}
\end{subequations}
holds. Ideally, we find $\revKU{\eta^\theta}$ such that identity \eqref{eq:dualnormEqual} is fulfilled, but this is typically not possible. Hence, one resorts to an estimate \eqref{eq:dualnormEst}, which ensures that $\revKU{\eta^\theta}$ encloses the \revKU{dual norm of the residual, which then} is small if and only if $\revKU{\eta^\theta}$ is. If this is still not achievable, a bound \eqref{eq:dualnormUpEst}, if not too loose, guarantees at least that the \revKU{dual norm of the residual} is below a \revKU{computable} threshold, if the bound is so.  
By applying \eqref{eq:dualnormEqual} or \eqref{eq:dualnormEst} to \eqref{eq:ErrorResiLinearV2}, the error is controlled by $\revKU{\eta^\theta}$ as
\begin{equation*}
	c \cdot \revKU{\eta^\theta} 
    \le \Vert u - \revKU{u^{\theta}} \Vert_{\cW\revKU{(\Omega)}} 
    \le C \cdot \revKU{\eta^\theta}, \quad c, C > 0
\end{equation*}
or by using \eqref{eq:dualnormUpEst} we have \revKU{at least}
\begin{equation*}
	\Vert u - \revKU{u^{\theta}} \Vert_{\cW\revKU{(\Omega)}} 
    \le C \cdot \revKU{\eta^\theta}, \quad C > 0.
\end{equation*}
Notice that the constants $c$ and $C$ in the last two equations could be different to those of \eqref{eq:ErrorResiLinearV2}.

In the next section we shall describe how to find an efficient surrogate $\revKU{\eta^\theta}$ for \revKU{any given approximation $u^\theta$}, such that one of the equations \eqref{eq:dualnorm} is satisfied. It is clear that the structure of the underlying space $\cY\revKU{(\Omega)}$ is crucial for the derivation. 

\subsection{Riesz representation} \label{subsec:RieszReps}
Since $\cY\revKU{(\Omega)}$ is a Hilbert space, the Riesz Representation Theorem\,(see e.g.\ \cite{aubin2011applied}) states that $\revKU{\eta^\theta}$ fulfilling \eqref{eq:dualnormEqual} is given by
\begin{equation*}
	\revKU{\eta^\theta} 
    := \Vert \reviewmath{\mathcal{R}}^{-1}_{\cY\revKU{(\Omega)}}\, \revKU{r_\Omega(u^\theta)} \Vert_{\cY\revKU{(\Omega)}},
\end{equation*}
where $\reviewmath{\mathcal{R}}_{\cY\revKU{(\Omega)}}: \cY\revKU{(\Omega)} \rightarrow \cY\revKU{(\Omega)}'$ denotes the Riesz-operator defined through the equation $(u, v)_{\cY\revKU{(\Omega)}} = \langle \reviewmath{\mathcal{R}}_{\cY\revKU{(\Omega)}} u, v \rangle_{\cY\revKU{(\Omega)}}$, $u, v \in \cY\revKU{(\Omega)}$, where $(\cdot,\cdot)_{\cY\revKU{(\Omega)}}$ denotes the inner product in $\cY\revKU{(\Omega)}$. The Riesz representative $\revKU{\hat{r}_\Omega(u^\theta)} := \reviewmath{\mathcal{R}}^{-1}_{\cY\revKU{(\Omega)}} \revKU{r_\Omega(u^\theta)} \in \cY\revKU{(\Omega)}$ is found by solving the equation
\begin{equation} \label{eq:RieszRep}
	(\revKU{\hat{r}_\Omega(u^\theta)}, v)_{\cY\revKU{(\Omega)}} 
    = \revKU{\dual{r_\Omega(u^\theta), v}_{\cY(\Omega)}}, \quad \forall v \in \cY\revKU{(\Omega)}.
\end{equation}
Since $\reviewmath{\mathcal{R}}_{\cY\revKU{(\Omega)}}$ is isometric we have $\revKU{\eta^\theta}  = \Vert \revKU{\hat{r}_\Omega(u^\theta)} \Vert_{\cY\revKU{(\Omega)}}$ and thus \eqref{eq:dualnormEqual}. Usually, evaluating $\Vert \cdot \Vert_{\cY\revKU{(\Omega)}}$ is much easier than $\Vert \cdot \Vert_{\cY\revKU{(\Omega)}'}$. Unfortunately, calculating the Riesz representative with \eqref{eq:RieszRep} is still unfeasible because the space $\cY\revKU{(\Omega)}$ is in almost all cases infinite-dimensional. Therefore, $\cY\revKU{(\Omega)}$ is replaced by a sufficiently high- but finite-dimensional subspace $\cY^\delta\revKU{(\Omega)} \subset \cY\revKU{(\Omega)}$, e.g., a finite element space. The Riesz representative is \revKU{thus} approximated by $\revKU{\hat{r}_\Omega^\delta(u^\theta)}  \in \cY^\delta\revKU{(\Omega)}$, which is the solution of
\begin{equation} \label{eq:RieszRepFinite}
	(\revKU{\hat{r}_\Omega^\delta(u^\theta)}, v^\delta)_{\cY\revKU{(\Omega)}} = 
    \revKU{\dual{r_\Omega(u^\theta), v^\delta}_{\cY(\Omega)}}, \quad \forall v^\delta \in \cY^\delta\revKU{(\Omega)}.
\end{equation}

\begin{remark}
If $\cY\revKU{(\Omega)}$ is not a Hilbert \revKU{space}, but only a Banach space, the Riesz Representation Theorem is not applicable. However, one way out is to use wavelet methods to directly bound the dual norm as proposed in\,\cite{ernst2024certified}. 
\end{remark}

With an increasing dimension of $\cY^\delta\revKU{(\Omega)}$ we can at least hope to have convergence of $\revKU{\hat{r}_\Omega^\delta(u^\theta)}$  to $\revKU{\hat{r}_\Omega(u^\theta)}$ in $\cY\revKU{(\Omega)}$ and therefore convergence of \revKU{the respective norms}. The drawback of this approach is that, in the case of finite elements, the space $\cY^\delta\revKU{(\Omega)}$ requires a discretization of the underlying domain $\Omega$. \revKU{Moreover, one would need to perform computations on $\Omega$, which we want to avoid.}

\section{Certification via extensions and restrictions} \label{sec:CertifyPINNs}
Now we are in position to describe the proposed approach towards deriving efficiently computable lower and upper bounds for the error $\|u-u^\theta\|_{\cW\revKU{(\Omega)}}$ of \revKU{a given approximation} $u^\theta$ \revKU{to} the solution $u$ of a PDE operator equation $Bu=f$. The idea is to use the residual $r\revKU{\equiv r(u^\theta)} :=f-B\,u^\theta$ on different domains, i.e., 
\begin{align*}
    \mycirc \subset \Omega \subset \square  \subset \R^d,
\end{align*}
\revKU{then denoted by $r_\mycirc$, $r_\Omega$ and $r_\square$, respectively,} where we shall use $\mycirc$ to derive a lower and $\square$ for an upper bound. \revKU{Since $u^\theta$ is a fixed and given input, we suppress its dependence in the notation.} This approach has a couple of consequences which we will address next:
 \begin{compactitem}
    \item We need to compute the Riesz representation of \revKU{$r$} by solving \eqref{eq:RieszRepFinite}. Hence, we need to compute \revKU{$\dual{r,v^\delta}_{\cY}$} for appropriate test functions $v^\delta$ on the respective domain. This typically amounts for computing inner products, i.e., integrals. If the \revKU{approximation $u^\theta$ and thus the} residual $r$ is defined pointwise, we \revKU{can} use quadrature (e.g.\ by choosing Gauss-Lobatto points).
    \item We need to restrict and extend \revKU{$r_\Omega$ from $\Omega$} to $\mycirc$ and $\square$ in an appropriate manner.
    \item We have to choose $\mycirc$ and $\square$ in such a way that 
    \begin{enumerate}
        \item[(i)] the residuals on these domains allow for sharp error bounds and 
        \item[(ii)] the computations \revKU{can be performed highly} efficient, which typically means that the geometries of $\mycirc$ and $\square$ need to be \enquote{simple}, e.g.\ a hypersphere or -cube. Then, standard discretizations and fast solvers such as geometric full multigrid or spectral methods are available.
    \end{enumerate}
\end{compactitem}

\subsection{Extension and restriction for primal and dual spaces}
One might think that we could just consider \revKU{\eqref{eq:RieszRepFinite}} by \revKU{simply exchanging} $\Omega$ \revKU{by} $\mycirc$ and $\square$, respectively. This, however, does not match our goals. Recall that $\cY(\Omega)$ is typically a function space, e.g.\ $H^1_0(\Omega)$. Just changing $\Omega$ to $\square$ would give rise to $H^1_0(\square)$. However, the restriction of a function in $H^1_0(\square)$ to $\Omega$ is in general \emph{not} in $H^1_0(\Omega)$. Moreover, depending on the geometry of $\Omega$, an extension might be a delicate issue. Finally, we cannot hope to control the norms of the residual on $\Omega$ by those on $\mycirc$ and $\square$. To overcome this difficulty, we propose the following recipe:
\begin{compactenum}[1.]
    \item Choose a space $\cZ(\square)$, which contains all extensions of $\cY(\Omega)$ by zero and which allows for fast solvers, \review{e.g.,} one may think of $H^1_0(\square)$.
    \item Identify a subspace $\cU(\square)\subseteq\cZ(\square)$, which is isomorphic to $\cY(\Omega)$, i.e., $\cU(\square)\cong\cY(\Omega)$. 
\end{compactenum}
We are going to formalize this idea in the following assumption.
\begin{assumption} \label{assume:Isomorph} 
	Let $\Omega \subset \square \subset \R^d$ be Lipschitz domains, and let $\cY(\Omega)$ and $\cZ(\square)$ be given normed function spaces. 
    \begin{compactenum}[(a)]
    \item Then, assume that there exists a subspace $\cU(\square)\subset \cZ(\square)$ (with the same norm $\Vert \cdot \Vert_{\cZ(\square)}$ as in $\cZ(\square)$), which is isomorphic to $\cY(\Omega)$. 
    \item The isomorphism is denoted by $\EBox \in \cL_{\text{is}}(\cY(\Omega), \cU(\square))$.
    \item We denote the continuous inverse operator of $\EBox$ by $\RBox := \EBox^{-1}$.
    \hfill$\diamond$
    \end{compactenum}
\end{assumption}
Although the assumption seems to be quite strong, we will see that for a large class of spaces we can in fact construct such an isomorphism. The names $\EBox$ and $\RBox$ are chosen to emphasize that the operators are often \emph{extension} and \emph{restriction}, at least in our examples described in Section \ref{sec:NumResults}. 
 
 We note a simple immediate consequence of Assumption \ref{assume:Isomorph}.
\begin{corollary}\label{cor:Isomorphismbounds}
	If Assumption\,\ref{assume:Isomorph} holds, then  
	\begin{equation*}
		c_\square \Vert v \Vert_{\cY(\Omega)} 
        \le \Vert \EBox v \Vert_{\cZ(\square)} 
        \le C_\square \Vert v \Vert_{\cY(\Omega)}, \quad \forall v \in \cY(\Omega),
	\end{equation*}
    where $c_\square := \Vert \RBox \Vert_{\cL(\cU(\square),\cY(\Omega))}^{-1}$ and $C_\square := \Vert \EBox \Vert_{\cL(\cY(\Omega),\cU(\square))}$.
    \qed
\end{corollary}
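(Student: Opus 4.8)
The plan is to derive both inequalities directly from the fact that $\EBox$ is an isomorphism with continuous inverse $\RBox$. This is essentially the standard observation that a bounded linear bijection between normed spaces, together with its bounded inverse, sandwiches the norm of the image between constant multiples of the norm of the preimage. First I would establish the upper bound: for any $v \in \cY(\Omega)$, the definition of the operator norm gives immediately
\begin{equation*}
  \Vert \EBox v \Vert_{\cZ(\square)} = \Vert \EBox v \Vert_{\cU(\square)} \le \Vert \EBox \Vert_{\cL(\cY(\Omega),\cU(\square))}\, \Vert v \Vert_{\cY(\Omega)} = C_\square\, \Vert v \Vert_{\cY(\Omega)},
\end{equation*}
where I use that $\cU(\square)$ carries the same norm as $\cZ(\square)$ by part (a) of Assumption \ref{assume:Isomorph}.

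For the lower bound I would exploit that $\RBox = \EBox^{-1}$ maps $\cU(\square)$ onto $\cY(\Omega)$ and is continuous by part (c). Writing $w := \EBox v \in \cU(\square)$, we have $v = \RBox w$, hence
\begin{equation*}
  \Vert v \Vert_{\cY(\Omega)} = \Vert \RBox w \Vert_{\cY(\Omega)} \le \Vert \RBox \Vert_{\cL(\cU(\square),\cY(\Omega))}\, \Vert w \Vert_{\cU(\square)} = \Vert \RBox \Vert_{\cL(\cU(\square),\cY(\Omega))}\, \Vert \EBox v \Vert_{\cZ(\square)}.
\end{equation*}
Dividing by $\Vert \RBox \Vert_{\cL(\cU(\square),\cY(\Omega))}$ (which is finite and positive since $\RBox$ is a continuous, nonzero operator) and recalling $c_\square = \Vert \RBox \Vert_{\cL(\cU(\square),\cY(\Omega))}^{-1}$ yields $c_\square \Vert v \Vert_{\cY(\Omega)} \le \Vert \EBox v \Vert_{\cZ(\square)}$, which combines with the previous display to give the claim.

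There is essentially no obstacle here — the statement is a routine unwinding of definitions. The only points deserving a word of care are: (i) making sure one is entitled to identify $\Vert \EBox v \Vert_{\cZ(\square)}$ with $\Vert \EBox v \Vert_{\cU(\square)}$, which is exactly part (a) of the assumption (the subspace $\cU(\square)$ inherits the norm of $\cZ(\square)$); and (ii) noting that $\Vert \RBox \Vert_{\cL(\cU(\square),\cY(\Omega))} > 0$, so that $c_\square$ is well defined — this holds because $\RBox$ is a bijection onto a nonzero space, so it cannot be the zero operator. With these remarks in place the proof is complete; I would present it in three or four lines.
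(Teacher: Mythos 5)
Your proof is correct and is exactly the argument the paper has in mind: the corollary is stated with an immediate \qed precisely because it follows from the continuity of $\EBox$ (upper bound) and of $\RBox=\EBox^{-1}$ (lower bound), together with the fact that $\cU(\square)$ carries the norm of $\cZ(\square)$. Your two remarks on identifying the norms and on $c_\square$ being well defined are sensible but routine; nothing further is needed.
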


With the operator $\EBox$, we have the desired connection between the primal spaces $\cY(\Omega)$ and $\reviewmath{\cU}(\square)$. We need such a relation also for the dual spaces. For that, we consider the adjoint operators $\aEBox\in \cL_{\text{is}}\left(\cU'(\square), \cY'(\Omega)\right)$ of $\EBox$ and $\aRBox\in \cL_{\text{is}}\left(\cY'(\Omega), \cU'(\square)\right)$ of the inverse \revKU{$\RBox$} of $\EBox$. By definition of adjoint operators, we have
\begin{align*}
	\dual{\aEBox \revKU{r}, v }_{\cY(\Omega)} 
    &= \dual{\revKU{r},\EBox v}_{\cU(\square)}, 
    && \forall v \in \cY(\Omega),  \forall \revKU{r} \in \cU'(\square)
    \quad\text{and}\\
	\dual{\aRBox \revKU{r}_\Omega, w}_{\cU(\square)} 
    &= \dual{\revKU{r}_\Omega,\RBox w}_{\cY(\Omega)}, 
    && \forall w \in \cU(\square),  \forall \revKU{r}_\Omega \in \cY'(\Omega).
\end{align*}
Furthermore, 
\begin{align*}
\Vert \EBox \Vert_{\cL(\cY(\Omega), \cU(\square))} &= \Vert \aEBox \Vert_{\cL(\cU'(\square), \cY'(\Omega))}\quad\text{and}\\
\Vert \RBox \Vert_{\cL(\cU(\square), \cY(\Omega))} &= \Vert \aRBox \Vert_{\cL(\cY'(\Omega), \cU'(\square))}.
\end{align*}
Note that the dual operators $\aRBox$ and $\aEBox$ are changing the roles of extension and restriction. In fact, $\aRBox$ is an extension operator for functionals defined on $\cY(\Omega)$ to those on $\cU(\square)$. 
Apparently, this requires the dual space $\cU'(\square)$ of $\cU(\square)$, whose norm is given by
\begin{align*}
    \| r\|_{\cU'(\square)}
    = \sup_{u\in \cU(\square)} \frac{r(u)}{\|u\|_{\cZ(\square)}}
    \revKU{\le \| r\|_{\cZ'(\square)}},
\end{align*}
which is in general \revKU{strictly} smaller than $\| r\|_{\cZ'(\square)}$ if $\cU(\square)\subsetneq \cZ(\square)$. With this norm, we have the following estimate.

\begin{lemma} \label{lemma:extBoundsBanachSpacev1}
	Let Assumption\,\ref{assume:Isomorph} hold, then 
	\begin{equation}
		c_\square \Vert \aRBox \revKU{r}_\Omega \Vert_{\cU'(\square)} 
        \le  \Vert \revKU{r}_\Omega \Vert_{\cY'(\Omega)} \le C_\square \Vert \aRBox \revKU{r}_\Omega \Vert_{\cU'(\square)} \quad \forall \revKU{r}_\Omega \in \cY'(\Omega)
	\end{equation}
    with the constants $0<c_\square\le C_\square <\infty$ defined in Corollary \ref{cor:Isomorphismbounds}.
\end{lemma}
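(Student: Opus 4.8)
The plan is to exploit the fact that $\aRBox$ is, by definition, the adjoint of $\RBox = \EBox^{-1}$, together with the norm identity $\Vert \RBox \Vert_{\cL(\cU(\square),\cY(\Omega))} = \Vert \aRBox \Vert_{\cL(\cY'(\Omega),\cU'(\square))}$ and the analogous identity for $\EBox$ and $\aEBox$. The key observation is that $\aEBox$ and $\aRBox$ are mutually inverse: since $\RBox \EBox = \id_{\cY(\Omega)}$ and $\EBox \RBox = \id_{\cU(\square)}$, taking adjoints (which reverse composition order) gives $\aEBox \aRBox = \id_{\cY'(\Omega)}$ and $\aRBox \aEBox = \id_{\cU'(\square)}$. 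Hence $\aEBox = (\aRBox)^{-1}$ on $\cU'(\square)$, and both are bounded isomorphisms.

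First I would prove the upper bound. For any $f_\Omega \in \cY'(\Omega)$, write $f_\Omega = \aEBox\,\aRBox f_\Omega$, so that
\begin{equation*}
    \Vert f_\Omega \Vert_{\cY'(\Omega)}
    = \Vert \aEBox\,\aRBox f_\Omega \Vert_{\cY'(\Omega)}
    \le \Vert \aEBox \Vert_{\cL(\cU'(\square),\cY'(\Omega))}\,\Vert \aRBox f_\Omega \Vert_{\cU'(\square)}
    = \Vert \EBox \Vert_{\cL(\cY(\Omega),\cU(\square))}\,\Vert \aRBox f_\Omega \Vert_{\cU'(\square)},
\end{equation*}
which is exactly $C_\square \Vert \aRBox f_\Omega \Vert_{\cU'(\square)}$ by the definition $C_\square := \Vert \EBox \Vert_{\cL(\cY(\Omega),\cU(\square))}$ from Corollary \ref{cor:Isomorphismbounds}. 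For the lower bound I would apply boundedness of $\aRBox$ directly:
\begin{equation*}
    \Vert \aRBox f_\Omega \Vert_{\cU'(\square)}
    \le \Vert \aRBox \Vert_{\cL(\cY'(\Omega),\cU'(\square))}\,\Vert f_\Omega \Vert_{\cY'(\Omega)}
    = \Vert \RBox \Vert_{\cL(\cU(\square),\cY(\Omega))}\,\Vert f_\Omega \Vert_{\cY'(\Omega)},
\end{equation*}
and since $c_\square := \Vert \RBox \Vert_{\cL(\cU(\square),\cY(\Omega))}^{-1}$, rearranging yields $c_\square \Vert \aRBox f_\Omega \Vert_{\cU'(\square)} \le \Vert f_\Omega \Vert_{\cY'(\Omega)}$, which is the claimed left inequality.

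The only genuinely subtle point — and the step I would be most careful about — is making sure the dual norm on $\cU'(\square)$ is the one induced by the norm $\Vert \cdot \Vert_{\cZ(\square)}$ restricted to $\cU(\square)$, as spelled out just before the lemma, and not $\Vert \cdot \Vert_{\cZ'(\square)}$; the adjoint identities $\Vert \EBox \Vert = \Vert \aEBox \Vert$ and $\Vert \RBox \Vert = \Vert \aRBox \Vert$ are stated with respect to precisely this pairing, so everything is consistent, but it is worth stating explicitly that $\aEBox, \aRBox$ act between $\cU'(\square)$ and $\cY'(\Omega)$ with these norms. Beyond that, the argument is a two-line application of submultiplicativity of the operator norm plus the adjoint-norm identities already recorded in the text, so no real obstacle remains; I would simply note that $0 < c_\square \le C_\square < \infty$ follows since $\Vert \id_{\cY(\Omega)} \Vert = \Vert \RBox \EBox \Vert \le \Vert \RBox \Vert \Vert \EBox \Vert$, giving $c_\square \le C_\square$.
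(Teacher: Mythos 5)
Your proof is correct and follows essentially the same route as the paper: the lower bound via continuity of $\aRBox$ with $\Vert \aRBox\Vert = \Vert \RBox\Vert = c_\square^{-1}$, and the upper bound via the factorization $\id_{\cY'(\Omega)} = \aEBox\circ\aRBox$ together with $\Vert \aEBox\Vert = \Vert \EBox\Vert = C_\square$. Your explicit remarks on the choice of norm on $\cU'(\square)$ and on $c_\square \le C_\square$ are consistent with the paper's setup but add nothing beyond it.
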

\begin{proof}
	The lower bound is the continuity of $\aRBox$, i.e.
	\begin{equation*}
		\Vert \aRBox \revKU{r}_\Omega \Vert_{\cU'(\square)} 
        \le \Vert \aRBox \Vert_{\cL(\cY'(\Omega), \cU'(\square))} \Vert \revKU{r}_\Omega \Vert_{\cY'(\Omega)} 
        \le \tfrac1{c_\square} \Vert \revKU{r}_\Omega \Vert_{\cY'(\Omega)}.
	\end{equation*}
	The upper bound follows from the continuity of $\aEBox$ and the fact that $\id_{\cY'(\Omega)} = (\id_{\cY(\Omega)})' = (\RBox \circ \EBox)' = (\aEBox \circ \aRBox)$, which concludes the proof.
\end{proof}

The estimates in Lemma \ref{lemma:extBoundsBanachSpacev1} seem to give us the desired upper and lower bounds. However, they involve the norm in $\cU'(\square)$, i.e., we would need the space $\cU(\square)$, which turns out to be inappropriate as the following example shows.

\begin{example} 
    Let $\cY(\Omega)=H^1_0(\Omega)$ and $\cZ(\square)= H^1_0(\square)\revKU{=\cY(\square)}$. We are searching for a subspace $\cU(\square)\subset\revKU{H^1_0(\square)}$, which is isomorphic to $\revKU{H^1_0(\Omega)}$. Apparently, the extension of an element of $H^1_0(\Omega)$ \revKU{by zero} is in $H^1_0(\square)$, and the restriction of an element in $H^1_0(\square)$ having zero trace on $\partial\Omega$ is in $H^1_0(\Omega)$. Hence, we could choose $\cU(\square)$ consisting of all zero extensions of $H^1_0(\Omega)$-functions. To compute those functions, would however require a discretization of $\Omega$, which we wanted to avoid.\hfill$\diamond$
\end{example}

This latter example shows that we need to be careful with the definition of the operators $\EBox$ yielding $\cZ(\square)$ and the subspace $\cU(\square)$. We want to do all computations on $\cZ(\square)$, which means that we need a norm-preserving extension from $\revKU{r}_\Omega\in\cY'(\Omega)$ via $\aRBox \revKU{r}_\Omega \in \cU'(\square)$ to $\cZ'(\square)$ (see Figure~\ref{fig:extension}). To this end, we suggest \revKU{to use} the norm-preserving Hahn-Banach Extension Theorem (see e.g.\ \cite[Sec.\ 9.1.2,\,Th.1]{kadets2018course}), ensuring the existence of extensions $\revKU{r}_\square^{\text{HB}} \in \cZ'(\square)$ of $\aRBox \revKU{r}_\Omega \in \cU'(\square)$ with
\begin{equation*}
	\Vert \aRBox \revKU{r}_\Omega \Vert_{\cU'(\square)} 
    = \Vert \revKU{r}_\square^{\text{HB}} \Vert_{\cZ'(\square)}.
\end{equation*} 
For all other extensions of $\aRBox \revKU{r}_\Omega$, the lower bound from Lemma \ref{lemma:extBoundsBanachSpacev1} will be lost. Unfortunately, the proof of the Hahn-Banach Extension Theorem is non-constructive, i.e., the Hahn-Banach extension $\revKU{r}_\square^{\text{HB}}$ is not known in general. Thus, we describe a construction of extensions of $\aRBox \revKU{r}_\Omega$ in \S\ref{Sec:Extension}. Then, in \S\ref{Sec:ComputableExtensions}, we develop computable approximations of norm-preserving extensions. 

\begin{figure}[ht]
\centering





  






\includegraphics{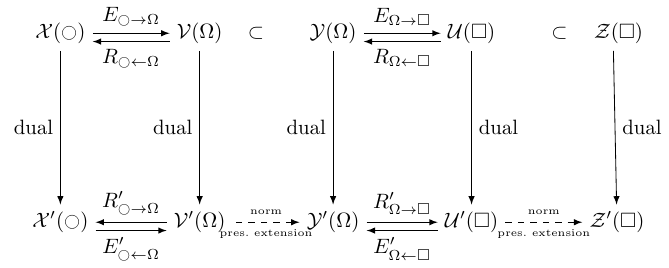}

\caption{Primal and dual extensions/restrictions. The extension $\ECirc$ embeds
the primal space $\cX(\mycirc)$ into
$\cV(\Omega)\subset\cY(\Omega)$, with inverse restriction $\RCirc$, while
$\EBox$ embeds $\cY(\Omega)$ into
$\cU(\square)\subset\cZ(\square)$, with inverse restriction $\RBox$.
The adjoint operators $\aECirc$ and $\aEBox$ extend functionals to
$\cV'(\Omega)$ and $\cU'(\square)$, respectively, while the adjoints of the restriction operators act in the reverse direction. The remaining steps
(dashed arrows) are the corresponding norm-preserving Hahn--Banach extensions to $\cY'(\Omega)$ and $\cZ'(\square)$.}
\label{fig:extension}
\end{figure}

\subsection{A lower bound}
For deriving a lower bound, we suggest to use \revKU{a} domain $\mycirc \subset \Omega$ and a corresponding function space $\cX(\mycirc)$ in such a way that Assumption \ref{assume:Isomorph} holds for the spaces $\cX(\mycirc)$ and $\cY(\Omega)$, replacing $\cY(\Omega)$ and $\cZ(\square)$. Then, there exists a subspace $\cV(\Omega) \subset \cY(\Omega)$, which is isomorphic to $\cX(\mycirc)$. We denote the corresponding extension isomorphism and its inverse (the restriction) by $\ECirc$ and $\RCirc$. Moreover, by switching the roles of $\aRCirc$ and $\aECirc$ we get the following bound.

\begin{lemma} \label{lemma:extBoundsBanachSpacev2}
	Let Assumption\,\ref{assume:Isomorph} hold \revKU{for $\mycirc \subset \Omega$ as well as $\cX(\mycirc)$, $\cY(\Omega)$, replacing $\cY(\Omega)$, $\cZ(\square)$}, such that there exists an operator $\revKU{\ECirc} \in \cL_{\text{is}}(\cX(\mycirc), \cV(\Omega))$. Then,
	\begin{equation}
		\tfrac{1}{C_\mycirc} \Vert \aECirc \revKU{r}_\Omega \Vert_{\cX'(\mycirc)} 
        \le  \Vert \revKU{r}_\Omega \Vert_{\cV'(\Omega)} 
        \le c_\mycirc \Vert \aECirc \revKU{r}_\Omega \Vert_{\cX'(\mycirc)} \quad \forall \revKU{r}_\Omega \in \cV'(\Omega)
	\end{equation}
    where $c_\mycirc := \Vert \RCirc \Vert_{\cL(\cV(\Omega),\cX(\mycirc))}$ and $C_\mycirc := \Vert \ECirc \Vert_{\cL(\cX(\mycirc),\cV(\Omega))}$. 
\end{lemma}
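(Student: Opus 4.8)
The plan is to replay, almost verbatim, the proof of Lemma~\ref{lemma:extBoundsBanachSpacev1}, but with the pair $\cV(\Omega)\subset\cY(\Omega)$ now in the role that $\cU(\square)\subset\cZ(\square)$ played there, and with the two adjoint operators exchanged: here we work with a functional $f_\Omega\in\cV'(\Omega)$ and its image $\aECirc f_\Omega\in\cX'(\mycirc)$, whereas before the functional lived on the smaller space and was pushed \emph{up} by $\aRBox$. First I would record the operators and their mapping properties: applying Assumption~\ref{assume:Isomorph} to $\mycirc\subset\Omega$ gives $\ECirc\in\cL_{\text{is}}(\cX(\mycirc),\cV(\Omega))$ with continuous inverse $\RCirc=\ECirc^{-1}\in\cL_{\text{is}}(\cV(\Omega),\cX(\mycirc))$ and $\Vert\ECirc\Vert=C_\mycirc$, $\Vert\RCirc\Vert=c_\mycirc^{-1}$; passing to Banach-space adjoints yields $\aECirc=(\ECirc)'\in\cL_{\text{is}}(\cV'(\Omega),\cX'(\mycirc))$ and $\aRCirc=(\RCirc)'\in\cL_{\text{is}}(\cX'(\mycirc),\cV'(\Omega))$, together with the norm identities $\Vert\aECirc\Vert=\Vert\ECirc\Vert=C_\mycirc$ and $\Vert\aRCirc\Vert=\Vert\RCirc\Vert=c_\mycirc^{-1}$.

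For the lower bound I would invoke nothing more than the continuity of $\aECirc$: for $f_\Omega\in\cV'(\Omega)$ one has $\Vert\aECirc f_\Omega\Vert_{\cX'(\mycirc)}\le\Vert\aECirc\Vert\,\Vert f_\Omega\Vert_{\cV'(\Omega)}=C_\mycirc\Vert f_\Omega\Vert_{\cV'(\Omega)}$, which rearranges into $\tfrac1{C_\mycirc}\Vert\aECirc f_\Omega\Vert_{\cX'(\mycirc)}\le\Vert f_\Omega\Vert_{\cV'(\Omega)}$. For the upper bound I would first take the adjoint of the identity $\ECirc\circ\RCirc=\id_{\cV(\Omega)}$; since adjunction reverses the order of composition this gives $\aRCirc\circ\aECirc=(\ECirc\circ\RCirc)'=(\id_{\cV(\Omega)})'=\id_{\cV'(\Omega)}$, so that $f_\Omega=\aRCirc(\aECirc f_\Omega)$ for every $f_\Omega\in\cV'(\Omega)$. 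Applying the continuity of $\aRCirc$ then yields $\Vert f_\Omega\Vert_{\cV'(\Omega)}=\Vert\aRCirc(\aECirc f_\Omega)\Vert_{\cV'(\Omega)}\le\Vert\aRCirc\Vert\,\Vert\aECirc f_\Omega\Vert_{\cX'(\mycirc)}=c_\mycirc^{-1}\Vert\aECirc f_\Omega\Vert_{\cX'(\mycirc)}$, i.e.\ the asserted upper bound with stability constant $\Vert\RCirc\Vert$ (note that, since $c_\mycirc\le C_\mycirc$, it is $\tfrac1{c_\mycirc}$ rather than $c_\mycirc$ that makes the sandwich consistent).

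I do not expect a genuine analytic obstacle here: the statement is essentially Lemma~\ref{lemma:extBoundsBanachSpacev1} read with \enquote{extension} and \enquote{restriction} interchanged and with $\square\leftrightarrow\Omega$, $\Omega\leftrightarrow\mycirc$. The only points requiring care are bookkeeping: keeping straight that $\aECirc$ and $\aRCirc$ are mutually inverse (being adjoints of mutually inverse isomorphisms), composing them in the correct order so as to land on $\id_{\cV'(\Omega)}$ rather than $\id_{\cX'(\mycirc)}$, and using that a bounded operator between Banach spaces and its adjoint share the same operator norm. One should also keep in mind -- exactly as in the discussion following Lemma~\ref{lemma:extBoundsBanachSpacev1} -- that $\cV'(\Omega)$ carries its own dual norm, the supremum being taken over $\cV(\Omega)\subsetneq\cY(\Omega)$ only; however, in the downstream use of this lemma one first restricts the residual $r^\theta\in\cY'(\Omega)$ to $\cV(\Omega)$, which is a canonical, norm-non-increasing map, so no Hahn--Banach-type extension is needed on this side and the resulting lower bound on $\Vert r^\theta\Vert_{\cV'(\Omega)}$ feeds directly into \eqref{eq:ErrorResiLinearV2}.
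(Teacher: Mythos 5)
Your proof is correct and follows essentially the same route as the paper: the lower bound from the continuity of $\aECirc$, and the upper bound by dualizing the identity $\ECirc\circ\RCirc=\id_{\cV(\Omega)}$ (equivalently $\id_{\cX(\mycirc)}=\RCirc\circ\ECirc$) and using the continuity of $\RCirc$, exactly mirroring the paper's proof of Lemma~\ref{lemma:extBoundsBanachSpacev1}. Your side remark is also right: the derivation produces the constant $\Vert\RCirc\Vert=\tfrac1{c_\mycirc}$ on the right-hand side, so the $c_\mycirc$ appearing in the stated inequality should be read as $\tfrac1{c_\mycirc}$ for the sandwich to be consistent (this does not affect Theorem~\ref{th:mainBounds}, which only uses the lower bound).
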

\begin{proof}
	The lower bound follows from the continuity of $\aECirc$ and the upper bound is derived using $\id_{\cX(\mycirc)} = \RCirc \circ \ECirc$ and the continuity of $\RCirc$.
\end{proof}

Now, we can put all pieces together and formulate the main result of this section. To this end, it is convenient to define for spaces $\cX \subset \cY$ the \emph{set of all extensions} of a given functional $\revKU{r} \in \cX'$ as
\begin{equation}\label{def:ext}
	\mathcal{E}(\revKU{r};\cY') := \big\lbrace \tilde{\revKU{r}} \in \cY': \tilde{\revKU{r}} \equiv \revKU{r} \text{ on } \cX \big\rbrace.
\end{equation}
With that at hand, we combine Lemma\,\ref{lemma:extBoundsBanachSpacev1} and Lemma\,\ref{lemma:extBoundsBanachSpacev2}.

\begin{theorem}\label{th:mainBounds} 
	Let $\mycirc \subset \Omega \subset \square~\revKU{\subset \R^d}$ and $\revKU{r}_\Omega \in \cY'(\Omega)$ be given. Furthermore, let $\EBox$, $\RBox$ and $\ECirc$, $\RCirc$ be the operators from Lemma\,\ref{lemma:extBoundsBanachSpacev1} and from Lemma\,\ref{lemma:extBoundsBanachSpacev2}, respectively. Then we have
	\begin{equation*}
		\tfrac{1}{C_{\mycirc}} \Vert \revKU{r}_\mycirc \Vert_{\cX'(\mycirc)} 
        \le \Vert \revKU{r}_\Omega \Vert_{\cY'(\Omega)} 
        \le C_\square\, \Vert \revKU{r}_\square \Vert_{\cZ'(\square)},
	\end{equation*}
	for all $\revKU{r}_\square \in \mathcal{E}(\aRBox \revKU{r}_\Omega; \cZ'(\square))$ and $\revKU{r}_\mycirc := \aECirc \left(\revKU{r}_\Omega|_{\cV(\Omega)}\right)$, where $\revKU{r}_\Omega|_{\cV(\Omega)}$ denotes the restriction of $\revKU{r}_\Omega: \cY(\Omega)\to\R$ to $\cV(\Omega)\subset\cY(\Omega)$\review{, and the constants defined as previously, i.e., $C_\mycirc := \Vert \ECirc \Vert_{\cL(\cV(\Omega),\cX(\mycirc))}$, $C_\square := \Vert \EBox \Vert_{\cL(\cY(\Omega),\cU(\square))}$}.
\end{theorem}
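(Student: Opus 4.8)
The plan is to chain together the two lemmas already proven, inserting the Hahn--Banach extension on the $\square$ side and a trivial restriction on the $\mycirc$ side. First I would establish the upper bound. Starting from $f_\Omega \in \cY'(\Omega)$, Lemma~\ref{lemma:extBoundsBanachSpacev1} gives $\Vert f_\Omega \Vert_{\cY'(\Omega)} \le C_\square \Vert \aRBox f_\Omega \Vert_{\cU'(\square)}$. Now $\aRBox f_\Omega \in \cU'(\square)$, and any $f_\square \in \mathcal{E}(\aRBox f_\Omega; \cZ'(\square))$ agrees with $\aRBox f_\Omega$ on $\cU(\square)$; hence for every $u \in \cU(\square)$ we have $f_\square(u) = (\aRBox f_\Omega)(u)$, and since $\cU(\square)$ carries the norm of $\cZ(\square)$,
\begin{equation*}
    \Vert \aRBox f_\Omega \Vert_{\cU'(\square)}
    = \sup_{u \in \cU(\square)} \frac{f_\square(u)}{\Vert u \Vert_{\cZ(\square)}}
    \le \sup_{z \in \cZ(\square)} \frac{f_\square(z)}{\Vert z \Vert_{\cZ(\square)}}
    = \Vert f_\square \Vert_{\cZ'(\square)},
\end{equation*}
because enlarging the supremum set cannot decrease it. Combining the two inequalities yields $\Vert f_\Omega \Vert_{\cY'(\Omega)} \le C_\square \Vert f_\square \Vert_{\cZ'(\square)}$, which is the right-hand inequality. (The role of Hahn--Banach here is only to guarantee that the set $\mathcal{E}(\aRBox f_\Omega; \cZ'(\square))$ contains a representative achieving equality, so the bound is not vacuously loose; the inequality itself holds for every extension.)

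Next I would establish the lower bound. Restrict $f_\Omega$ to the subspace $\cV(\Omega) \subset \cY(\Omega)$, obtaining $g := f_\Omega|_{\cV(\Omega)} \in \cV'(\Omega)$. Since the supremum defining $\Vert \cdot \Vert_{\cV'(\Omega)}$ runs over the smaller set $\cV(\Omega)$, we have $\Vert g \Vert_{\cV'(\Omega)} \le \Vert f_\Omega \Vert_{\cY'(\Omega)}$. Then Lemma~\ref{lemma:extBoundsBanachSpacev2}, applied with the triple $(\cX(\mycirc), \cV(\Omega))$ in place of $(\cY(\Omega), \cZ(\square))$ — which is legitimate precisely because we assumed Assumption~\ref{assume:Isomorph} holds for that pair — gives $\tfrac{1}{C_\mycirc} \Vert \aECirc g \Vert_{\cX'(\mycirc)} \le \Vert g \Vert_{\cV'(\Omega)}$. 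Writing $f_\mycirc := \aECirc(f_\Omega|_{\cV(\Omega)}) = \aECirc g$ and concatenating, $\tfrac{1}{C_\mycirc} \Vert f_\mycirc \Vert_{\cX'(\mycirc)} \le \Vert g \Vert_{\cV'(\Omega)} \le \Vert f_\Omega \Vert_{\cY'(\Omega)}$, which is the left-hand inequality. Together the two chains give the claimed two-sided bound.

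I do not expect a serious obstacle; the theorem is essentially a bookkeeping composition of Lemma~\ref{lemma:extBoundsBanachSpacev1} and Lemma~\ref{lemma:extBoundsBanachSpacev2}. The one point requiring a little care is the direction of the two one-sided estimates actually used: for the upper bound we only need the \emph{upper} half of Lemma~\ref{lemma:extBoundsBanachSpacev1} together with the elementary fact $\Vert \cdot \Vert_{\cU'(\square)} \le \Vert \cdot \Vert_{\cZ'(\square)}$ on extensions, whereas for the lower bound we only need the \emph{lower} half of Lemma~\ref{lemma:extBoundsBanachSpacev2} together with $\Vert \cdot \Vert_{\cV'(\Omega)} \le \Vert \cdot \Vert_{\cY'(\Omega)}$ on restrictions — the inequalities happen to "point the right way" in each case, and it is worth double-checking that the constants $C_\square$ and $1/C_\mycirc$ (rather than $c_\square$ or $c_\mycirc$) are the ones that survive. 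No new estimates or constants beyond those in Corollary~\ref{cor:Isomorphismbounds} are introduced.
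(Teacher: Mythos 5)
Your proposal is correct and follows essentially the same route as the paper: upper half of Lemma~\ref{lemma:extBoundsBanachSpacev1} plus the fact that any extension in $\mathcal{E}(\aRBox f_\Omega;\cZ'(\square))$ has $\cZ'(\square)$-norm at least $\Vert \aRBox f_\Omega\Vert_{\cU'(\square)}$, then the elementary restriction inequality plus the lower half of Lemma~\ref{lemma:extBoundsBanachSpacev2}. Your observation that the upper bound holds for \emph{every} extension by the supremum-over-a-subset argument, with Hahn--Banach needed only to guarantee an extension attaining equality, is exactly the content of the paper's phrase ``all other extensions have larger norms,'' and your identification of which constants ($C_\square$ and $1/C_\mycirc$) survive is accurate.
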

\begin{proof}
To derive the upper bound, we first use Lemma\,\ref{lemma:extBoundsBanachSpacev1}, which yields $\Vert \revKU{r}_\Omega \Vert_{\cY'(\Omega)} \le C_\square \Vert \aRBox \revKU{r}_\Omega \Vert_{\cU'(\square)}$. Next, the Hahn-Banach Extension Theorem ensures the existence of an extension $\revKU{r}_\square^{\text{HB}} \in \cZ'(\square)$ of $\aRBox \revKU{r}_\Omega \in \cU'(\square)$ to $\cZ'(\square)$ with the identity $\Vert \aRBox \revKU{r}_\Omega \Vert_{\cU'(\square)} = \Vert \revKU{r}_\square^{\text{HB}} \Vert_{\cZ'(\square)}$ and all other extensions have larger norms, which proves the upper bound. 
For the lower bound, \review{since $\cV(\Omega)\subset\cY(\Omega)$ is equipped with the norm inherited from $\cY(\Omega)$, every $r_\Omega\in\cY'(\Omega)$ induces by restriction a continuous functional $r_\Omega|_{\cV(\Omega)}\in\cV'(\Omega)$}, so that $\Vert \revKU{r}_\Omega |_{\cV(\Omega)}\Vert_{\cV'(\Omega)} \le \Vert \revKU{r}_\Omega \Vert_{\cY'(\Omega)}$. Finally, the lower bound in Lemma\,\ref{lemma:extBoundsBanachSpacev2} applied to $\revKU{r}_\Omega|_{\cV(\Omega)} \in \cV'(\Omega)$ completes the proof.
\end{proof}
\review{\begin{remark}
The lower bound is (only) based upon $r_\Omega|_{\cV(\Omega)}$, the restricted functional. In the isometric case
$C_\mycirc=1$, the lower estimator is exact,  precisely when $r_\Omega$ coincides with a norm-preserving extension of this restriction (cf.~Figure~\ref{fig:extension}), i.e., $\|r_\Omega|_{\cV(\Omega)}\|_{\cV'(\Omega)}
=\|r_\Omega\|_{\cY'(\Omega)}$.
\end{remark}}

\subsection{Constructing an extension}\label{Sec:Extension}
The main building blocks of the above theory are the extension isomorphism $\ECirc$ and $\EBox$, \review{together with} their duals and inverses. Of course, the extensions depend on the geometry of $\Omega$ and the specific form of the spaces $\cY(\Omega)$ and $\cZ(\square)$, which in turn are determined by the underlying PDE. We are going to detail \review{the} two relevant cases \review{used in the remainder of the paper}. 

\subsubsection{Sobolev Spaces with zero trace}\label{Subsec:Sobolev} Let $\Omega \subset \mathbb{R}^{d}$ and let $\cY(\Omega):=\reviewmath{H^m_0(\Omega)}$ be the Sobolev spaces  of order $m\in\mathbb{N}$. 

\revKU{%
\begin{remark}
All what is said below can also be formulated for $W^{m,p}_0(\Omega)$-spaces. However, for the ease of presentation,  we restrict the presentation to Hilbert spaces $H^m_0(\Omega)$, which is the setting used in this paper.
\end{remark}}

\revKU{The} \emph{zero extension} $\tilde{u}$ of $u \in  \reviewmath{H^m_0(\Omega)}$ is defined as
\begin{equation*}
	\tilde{u}(x) :=  
	\begin{cases}
		u(x), \quad & x \in \Omega, \\[2pt]
		0, \quad & x \in \Omega^{\text{c}}:=\mathbb{R}^d\setminus\Omega.
	\end{cases}
\end{equation*}

\begin{proposition}[{\cite[Lemma 3.27]{Adams}}]\label{prop:Wmp0extensions}
    Let $u \in \reviewmath{H^m_0(\Omega)}$. If $\vert \alpha \vert \le m$, then $D^\alpha \tilde{u} = \widetilde{D^\alpha u}$ in the distributional sense in $\mathbb{R}^{\reviewmath{d}}$, i.e., $\tilde{u} \in \reviewmath{H^m_0(\mathbb{R}^d)}$.
    \hfill\qed
\end{proposition}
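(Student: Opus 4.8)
The statement to prove is Proposition~\ref{prop:Wmp0extensions}: for $u \in W_0^{m,p}(\Omega)$, the zero extension $\tilde u$ satisfies $D^\alpha \tilde u = \widetilde{D^\alpha u}$ distributionally on $\mathbb{R}^n$ for every multi-index with $|\alpha| \le m$, hence $\tilde u \in W^{m,p}(\mathbb{R}^n)$. The plan is to exploit the definition of $W_0^{m,p}(\Omega)$ as the closure of $C_c^\infty(\Omega)$ in the $W^{m,p}$-norm, so that the identity can first be checked on test functions and then passed to the limit.

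First I would handle the case $u = \varphi \in C_c^\infty(\Omega)$. Here $\tilde\varphi \in C_c^\infty(\mathbb{R}^n)$ trivially (extension by zero of a compactly supported smooth function in $\Omega$ is smooth on all of $\mathbb{R}^n$, since $\varphi$ and all its derivatives vanish in a neighbourhood of $\partial\Omega$), and classically $D^\alpha \tilde\varphi = \widetilde{D^\alpha \varphi}$ because $D^\alpha\varphi$ also has compact support in $\Omega$. So the claimed formula holds pointwise, a fortiori distributionally, for $\varphi \in C_c^\infty(\Omega)$. Next, given $u \in W_0^{m,p}(\Omega)$, pick a sequence $\varphi_k \in C_c^\infty(\Omega)$ with $\varphi_k \to u$ in $W^{m,p}(\Omega)$, i.e.\ $D^\alpha\varphi_k \to D^\alpha u$ in $L_p(\Omega)$ for all $|\alpha|\le m$. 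Since zero extension is an isometry from $L_p(\Omega)$ into $L_p(\mathbb{R}^n)$, we get $\widetilde{\varphi_k} \to \tilde u$ and $\widetilde{D^\alpha\varphi_k} \to \widetilde{D^\alpha u}$ in $L_p(\mathbb{R}^n)$, in particular in $L_{1,\mathrm{loc}}(\mathbb{R}^n)$, hence also in the sense of distributions on $\mathbb{R}^n$.

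The final step is to pass the distributional identity $D^\alpha\widetilde{\varphi_k} = \widetilde{D^\alpha\varphi_k}$ to the limit. For a fixed test function $\psi \in C_c^\infty(\mathbb{R}^n)$ one writes $\langle D^\alpha \widetilde{\varphi_k},\psi\rangle = (-1)^{|\alpha|}\langle \widetilde{\varphi_k}, D^\alpha\psi\rangle \to (-1)^{|\alpha|}\langle \tilde u, D^\alpha\psi\rangle = \langle D^\alpha\tilde u,\psi\rangle$ using $L_p \to L_{1,\mathrm{loc}}$ convergence and that $D^\alpha\psi$ is bounded with compact support; on the other hand $\langle D^\alpha\widetilde{\varphi_k},\psi\rangle = \langle \widetilde{D^\alpha\varphi_k},\psi\rangle \to \langle\widetilde{D^\alpha u},\psi\rangle$. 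Uniqueness of distributional limits gives $D^\alpha\tilde u = \widetilde{D^\alpha u}$ on $\mathbb{R}^n$. Since each $\widetilde{D^\alpha u} \in L_p(\mathbb{R}^n)$ with $\|\widetilde{D^\alpha u}\|_{L_p(\mathbb{R}^n)} = \|D^\alpha u\|_{L_p(\Omega)}$, all weak derivatives of $\tilde u$ up to order $m$ lie in $L_p(\mathbb{R}^n)$, i.e.\ $\tilde u \in W^{m,p}(\mathbb{R}^n)$, which is the assertion.

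I do not anticipate a serious obstacle, since the argument is the standard density reduction; the only point requiring a little care is the interchange of limit and distributional pairing, which is why I would be explicit that $L_p$-convergence on $\mathbb{R}^n$ (with $p\ge 1$) implies $L_{1,\mathrm{loc}}$-convergence and hence convergence against any fixed compactly supported smooth test function — a uniform integrability issue that never bites because the pairing only ever sees a fixed compact set. One could alternatively avoid sequences entirely and argue directly: for $\psi \in C_c^\infty(\mathbb{R}^n)$, $\langle D^\alpha\tilde u,\psi\rangle = (-1)^{|\alpha|}\int_\Omega u\, D^\alpha\psi$, and approximating $u$ by $\varphi_k$ inside $\Omega$ together with integration by parts on $\Omega$ (where the boundary terms vanish because $\varphi_k \in C_c^\infty(\Omega)$) yields $\int_\Omega (D^\alpha u)\psi$; but the sequential formulation above is cleanest.
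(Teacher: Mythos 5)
Your argument is correct and is essentially the standard proof of \cite[Lemma 3.27]{Adams}, which is all the paper relies on here (it states the result with a citation rather than reproving it): reduce to $C_c^\infty(\Omega)$ by the definition of $W_0^{m,p}(\Omega)$ as a closure, use that zero extension is an $L_p$-isometry commuting with $D^\alpha$ on test functions, and pass to the limit in the distributional pairing. Your closing remark — the sequence-free version that integrates by parts against $\varphi_k$ directly inside $\Omega$ — is in fact exactly how Adams writes it, so nothing further is needed.
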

Studying the proof of \cite[Lemma 3.27]{Adams}, we note that there are no assumptions on \revKU{the geometry of } the domain $\Omega$. In the following we denote the dual spaces of \reviewmath{$H^m_0(\Omega)$} by \reviewmath{$H^{-m}(\Omega) := (H^m_0(\Omega))'$}.

Let $u\in \cY(\Omega)=\reviewmath{H^m_0(\Omega)}$ for some $\Omega\subset\R^d$, $\tilde{u}$ be its zero extension to $\R^d$ and let $\tilde{u}|_\square$ be the restriction to $\square\supset\Omega$. Then, $\tilde{u}|_\square\in \reviewmath{H^m_0(\square)}$ and we define 
\begin{align}\label{eq:EboxSobolev}
    \EBox: \reviewmath{H^m_0(\Omega)} \rightarrow \reviewmath{H^m_0(\square)}
    \quad\text{by}\quad
    u \mapsto \EBox u := \tilde{u}|_\square,
\end{align}
which is a linear operator. Setting 
\begin{equation}\label{eq:UdefSobolev}
	\cU(\square) 
    := \EBox \left(\revKU{H_0^{m}(\Omega)} \right)  
    \subsetneq \cZ(\square) = \revKU{H_0^{m}}(\square)
    \revKU{\equiv \cY(\square)},
\end{equation}
i.e., the range of $\EBox$, Proposition\,\ref{prop:Wmp0extensions} ensures that $\EBox: \revKU{H_0^{m}}(\Omega) \rightarrow \cU(\square)$ is an isometry, i.e.,
\begin{align}\label{eq:Isometry}
    C_\square = \| \EBox\|_{\cL(\cY(\Omega),\cU(\reviewmath{\square}))}=1.
\end{align}
Hence, the operator is continuous and bounded from below. By \cite[Lemma\,2.8]{Abramovich2002}, this is equivalent to $\EBox$ being an isomorphism. The restriction $\RBox: \cU(\square) \rightarrow \revKU{H_0^{m}}(\Omega)$ is the inverse operator and we get $c_\square = \| \RBox\|_{\cL(\cY(\Omega),\cU(\reviewmath{\square}))}^{-1}=1$. Note, that the constants in Corollary \ref{cor:Isomorphismbounds} are in fact unity here, which is due to the zero trace in combination with zero extension.

We can proceed in an analogous manner for $\mycirc \subset \Omega$, $\cX(\mycirc) :=  \revKU{H_0^{m}}(\mycirc)$ and get an isomorphism $\ECirc:  \revKU{H_0^{m}}(\mycirc) \rightarrow \cV(\Omega)$ defined as $\ECirc u := \tilde{u}|_\Omega$ for 
\begin{equation}\label{eq:VdefSobolev}
	\cV(\Omega) := \ECirc \left( \revKU{H_0^{m}}(\mycirc) \right) \subsetneq \cY(\Omega).
\end{equation}
Again, we obtain isometries, i.e., the involved constants are unity. Thus, Theorem\,\ref{th:mainBounds} implies the following statement.
\begin{corollary}\label{cor:boundsSobolev}
	Let $\mycirc\subset\Omega\subset\square\subset\R^d$, $\cY(\Omega):= \revKU{H_0^{m}}(\Omega)$, 
    $\cX(\mycirc):= \revKU{\cY(\mycirc):=} \revKU{H_0^{m}}(\mycirc)$  and 
    $\cZ(\square):= \revKU{\cY(\square):=} \revKU{H_0^{m}}(\square)$ for $m\in\mathbb{N}$ and $1 \le p \le \infty$. Let $\ECirc$, $\EBox$ be defined as above and $\RCirc$, $\RBox$ being their inverses. Then, 
	\begin{equation}
		\Vert \revKU{r}_\mycirc \Vert_{\revKU{H^{-m}}(\mycirc)} \le \Vert \revKU{r}_\Omega \Vert_{\revKU{H^{-m}}(\Omega)} \le \Vert \revKU{r}_\square \Vert_{\revKU{H^{-m}}(\square)}
	\end{equation}
	for all $\revKU{r}_\square \in \mathcal{E}(\aRBox \revKU{r}_\Omega; \revKU{H^{-m}}(\square))$ and $\revKU{r}_\mycirc := \aECirc \left(\revKU{r}_\Omega|_{\cV(\Omega)}\right)$ with $\cV(\Omega)$ defined in \eqref{eq:VdefSobolev}.
    \hfill\qed
\end{corollary}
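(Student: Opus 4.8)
The plan is to derive Corollary \ref{cor:boundsSobolev} as a direct specialization of Theorem \ref{th:mainBounds}, after verifying that the hypotheses of that theorem — namely Assumption \ref{assume:Isomorph} for the pairs $(\cY(\Omega),\cZ(\square))$ and $(\cX(\mycirc),\cY(\Omega))$ — are met by the zero-extension operators introduced in \S\ref{Subsec:Sobolev}. First I would invoke Proposition \ref{prop:Wmp0extensions}: for $u\in W_0^{m,p}(\Omega)$ the zero extension $\tilde u$ lies in $W^{m,p}(\R^d)$ with $D^\alpha\tilde u=\widetilde{D^\alpha u}$ for all $|\alpha|\le m$, and moreover $\tilde u$ vanishes outside $\overline\Omega\subset\square$, so that $\tilde u|_\square\in W_0^{m,p}(\square)$. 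Hence $\EBox u:=\tilde u|_\square$ is well-defined and linear into $\cZ(\square)=W_0^{m,p}(\square)$, and the key observation is that it is \emph{norm-preserving}: since all derivatives of $\tilde u$ up to order $m$ are the zero extensions of the corresponding derivatives of $u$, and the extension by zero is an isometry on $L_p$, one gets $\|\EBox u\|_{\cZ(\square)}=\|u\|_{\cY(\Omega)}$. Defining $\cU(\square)$ as the range of $\EBox$ (equation \eqref{eq:UdefSobolev}), $\EBox:\cY(\Omega)\to\cU(\square)$ is then an isometric bijection, hence an isomorphism (using \cite[Lemma\,2.8]{Abramovich2002} as in the text), and $\RBox:=\EBox^{-1}$ is its inverse with operator norm $1$. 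Therefore Assumption \ref{assume:Isomorph} holds with $c_\square=C_\square=1$, by Corollary \ref{cor:Isomorphismbounds}.

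Next I would repeat this argument verbatim with $\mycirc$ in place of $\Omega$ and $\Omega$ in place of $\square$: the zero extension of $u\in W_0^{m,p}(\mycirc)$ restricted to $\Omega$ defines $\ECirc u:=\tilde u|_\Omega\in W_0^{m,p}(\Omega)$, it is again an isometry onto its range $\cV(\Omega)$ (equation \eqref{eq:VdefSobolev}), and $\RCirc=\ECirc^{-1}$. Thus the hypotheses of Lemma \ref{lemma:extBoundsBanachSpacev2} are satisfied with $c_\mycirc=C_\mycirc=1$. The dual spaces are, by the notational convention introduced right after Proposition \ref{prop:Wmp0extensions}, $W_0^{-m,q}(\cdot)=(W_0^{m,p}(\cdot))'$ with $\tfrac1p+\tfrac1q=1$, so $\cY'(\Omega)=W_0^{-m,q}(\Omega)$, $\cZ'(\square)=W_0^{-m,q}(\square)$, and $\cX'(\mycirc)=W_0^{-m,q}(\mycirc)$.

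With all four constants equal to $1$, Theorem \ref{th:mainBounds} applied to $f_\Omega\in\cY'(\Omega)=W_0^{-m,q}(\Omega)$ immediately yields
\[
    \|f_\mycirc\|_{W_0^{-m,q}(\mycirc)}
    \;\le\; \|f_\Omega\|_{W_0^{-m,q}(\Omega)}
    \;\le\; \|f_\square\|_{W_0^{-m,q}(\square)}
\]
for every $f_\square\in\mathcal E(\aRBox f_\Omega;W_0^{-m,q}(\square))$ and $f_\mycirc:=\aECirc(f_\Omega|_{\cV(\Omega)})$, which is exactly the claim (the slight notational discrepancy in the statement, $\cV'(\Omega)$ versus $\cV(\Omega)$, is the domain of the restricted functional and causes no issue). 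So the proof is essentially an assembly: check the isometry property of the two zero-extension maps, read off the unit constants from Corollary \ref{cor:Isomorphismbounds}, and quote Theorem \ref{th:mainBounds}.

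The only genuinely substantive point — and hence the main obstacle, though a mild one — is justifying that $\EBox$ really maps into $W_0^{m,p}(\square)$ rather than merely $W^{m,p}(\square)$, and that it is an isometry onto its range. Proposition \ref{prop:Wmp0extensions} gives $\tilde u\in W^{m,p}(\R^d)$ but one must additionally argue membership in the space with zero boundary values on $\partial\square$: this follows because $\tilde u$ has compact support contained in $\overline\Omega\subset\square$ (one may take $\square$ with $\overline\Omega$ compactly contained, or handle the general Lipschitz case by a density/approximation argument), so $\tilde u|_\square$ is a limit in $W^{m,p}$ of $C_c^\infty(\square)$ functions. The isometry claim then reduces to the elementary fact that $\|\widetilde{g}\|_{L_p(\R^d)}=\|g\|_{L_p(\Omega)}$ for any $g\in L_p(\Omega)$, applied to $g=D^\alpha u$, $|\alpha|\le m$, together with the derivative identity of Proposition \ref{prop:Wmp0extensions}. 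For $p=\infty$ one interprets $W_0^{m,\infty}$ in the usual way and the same reasoning applies. No hypothesis on the regularity of $\partial\Omega$ is needed, which is the whole point of relying on \cite[Lemma 3.27]{Adams} rather than on a Sobolev extension theorem.
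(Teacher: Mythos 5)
Your proposal is correct and follows essentially the same route as the paper: verify Assumption \ref{assume:Isomorph} for both pairs of spaces via the zero-extension isometry of Proposition \ref{prop:Wmp0extensions}, observe that all constants in Corollary \ref{cor:Isomorphismbounds} and Lemma \ref{lemma:extBoundsBanachSpacev2} equal one, and then invoke Theorem \ref{th:mainBounds}. Your extra care about why $\tilde u|_\square$ lands in $W_0^{m,p}(\square)$ rather than merely $W^{m,p}(\square)$ is a reasonable elaboration of a point the paper leaves implicit, not a different argument.
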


\subsubsection{Bochner spaces}\label{Subsec:Bochner}
We are now going to consider instationary parabolic problems involving space and time as already introduced in Example \ref{Ex:SpaceTime} for $Y(\Omega):=\revKU{H_0^{m}}(\Omega)$. We perform the embedding strategy only w.r.t.\ the space variable and set
\begin{align*}
    \cX(\mycirc) := L_2(I; \revKU{H_0^{m}}(\mycirc)),
    \quad
    \cY(\Omega) := L_2(I; \revKU{H_0^{m}}(\Omega)),
    \quad
    \cZ(\square) := L_2(I; \revKU{H_0^{m}}(\square)).
\end{align*}
Hence, we can reuse the findings of the previous \S\ref{Subsec:Sobolev}. Using $\EBox$ defined in \eqref{eq:EboxSobolev} and $U(\square):=\EBox \left(\revKU{H_0^{m}}(\Omega) \right)$ as in \eqref{eq:UdefSobolev}, we define a space-time extension operator 
\begin{equation*}
	\EstBox : 
    \cY(\Omega) = L_2\left(I; Y(\Omega)\right) \rightarrow L_2\left(I; U(\square)\right)
    =:\cU(\square),
\end{equation*} 
 and 
\begin{equation}\label{eq:timeExt}
	(\EstBox v)(t) := \EBox (v(t)),
    \qquad t\in I\, \text{a.e.},
    \quad
    v\in \cY(\Omega).
\end{equation}
With this definition, the following result is immediate. 

\begin{proposition}\label{prop:LisBochner}
    Let $\EBox \in \cL_{\text{is}}\left(Y(\Omega), U(\square)\right)$. Then, for $\EstBox$  defined as in \eqref{eq:timeExt}, we have $\EstBox \in \cL_{\text{is}}\left(L_2(I;Y(\Omega)), L_2(I;U(\square))\right)$ .
\end{proposition}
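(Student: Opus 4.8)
The plan is to show that the pointwise-in-time extension operator $\EstBox$ inherits the isomorphism property directly from $\EBox$, by exploiting the fact that Bochner spaces of the form $L_2(I;X)$ treat the spatial operator "fibrewise" in $t$ and that the $L_2(I;\cdot)$-norm is simply the $L_2(I)$-integral of the pointwise norm. Concretely, I would first check that $\EstBox$ as defined in \eqref{eq:timeExt} is well-defined: for $v\in L_2(I;Y(\Omega))$, the map $t\mapsto \EBox(v(t))$ is Bochner measurable (because $\EBox$ is a bounded linear operator and hence maps measurable functions to measurable functions, e.g.\ by Pettis measurability after composing with the measurable $v$), and it takes values in $U(\square)$ by construction. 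Linearity of $\EstBox$ is immediate from linearity of $\EBox$.

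Next I would establish the norm equivalence. Using $\|\EBox w\|_{\cZ(\square)}\le C_\square\|w\|_{Y(\Omega)}$ and $\|w\|_{Y(\Omega)}\le c_\square^{-1}\|\EBox w\|_{\cZ(\square)}$ from Corollary~\ref{cor:Isomorphismbounds} (with $w=v(t)$), I square, integrate over $I$, and pull the constants out of the integral. This gives
\begin{equation*}
    c_\square\,\|v\|_{L_2(I;Y(\Omega))}
    \le \|\EstBox v\|_{L_2(I;U(\square))}
    \le C_\square\,\|v\|_{L_2(I;Y(\Omega))},
    \qquad \forall v\in L_2(I;Y(\Omega)),
\end{equation*}
so $\EstBox$ is bounded and bounded below by the \emph{same} constants $c_\square,C_\square$ as $\EBox$. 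In particular it is injective with closed range.

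It then remains to verify surjectivity onto $L_2(I;U(\square))$, for which the natural candidate inverse is the fibrewise application of $\RBox=\EBox^{-1}$, namely $(\RstBox g)(t):=\RBox(g(t))$. The same measurability and norm-equivalence argument (now applied to $\RBox$) shows $\RstBox\in\cL(L_2(I;U(\square)),L_2(I;Y(\Omega)))$, and $\EstBox\RstBox=\id$, $\RstBox\EstBox=\id$ hold for a.e.\ $t$ by applying the corresponding identities for $\EBox$ and $\RBox$ pointwise. Hence $\EstBox$ is a continuous bijection with continuous inverse, i.e.\ $\EstBox\in\cL_{\text{is}}(L_2(I;Y(\Omega)),L_2(I;U(\square)))$, which is the claim.

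The only genuinely non-routine point is the measurability of $t\mapsto\EBox(v(t))$: since $v$ is (equivalence class of a) strongly measurable function valued in $Y(\Omega)$, composing with the continuous linear map $\EBox$ preserves strong measurability (continuous images of measurable functions are measurable, and separability is inherited), and the norm equivalence just shown then guarantees the composition lies in $L_2(I;U(\square))$ rather than merely being measurable. Everything else is a one-line consequence of "integrate the pointwise estimate over $I$", so I would keep the write-up short and not belabor the Fubini/Bochner technicalities.
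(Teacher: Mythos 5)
Your proof is correct and takes essentially the same route as the paper's: integrate the pointwise norm equivalence from Corollary~\ref{cor:Isomorphismbounds} over $I$ to obtain boundedness and boundedness from below, then establish surjectivity via the fibrewise inverse $t\mapsto\RBox(g(t))$. The only difference is that you spell out the Bochner measurability of $t\mapsto\EBox(v(t))$, a technical point the paper leaves implicit.
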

\begin{proof}
    By Corollary\,\ref{cor:Isomorphismbounds}, we have that the function $t \mapsto \Vert \EBox (v(t)) \Vert_{Z(\square)}$, $Z(\square):=\revKU{H_0^{m}}(\square)$, is upper-bounded by the function $t \mapsto C_\square \Vert (v(t)) \Vert_{\cY(\Omega)}$ and lower-bounded by the function $t \mapsto \frac1{c_{\square}} \Vert (v(t)) \Vert_{\cY(\Omega)}$. Thus, $\EstBox v \in L_2(I;U(\square))$ and the operator $\EstBox$ is continuous as well as bounded from below. Using again \cite[Lemma 2.8]{Abramovich2002}, it remains to show that the range of $\EstBox$ is $L_2(I;U(\square))$. Given $v \in L_2(I;U(\square))$, define $w \in L_2(I;Y(\Omega))$ by $w(t):= \RBox(v(t))$ for almost all $t \in I$. With that we have $\EstBox w = v$, which completes the proof. 
\end{proof}

The inverse operator $\EstBox^{-1}$ is denoted by $\RstBox$. Finally, we define $\EstCirc$ and $\RstCirc$ using the operators $\ECirc$, $\RCirc$ from Corollary\,\ref{cor:boundsSobolev}, i.e., $(\EstCirc v)(t):= \ECirc(v(t))$ and $\RstCirc:=\EstCirc^{-1}$. Then, we can put everything together and obtain.

\begin{corollary}\label{cor:boundsBochner}
    With $\cX(\mycirc)$, $\cY(\Omega)$, $\cZ(\square)$ and the above operators, we have for all $\revKU{r}_\square \in \mathcal{E}\left(\astRBox \revKU{r}_\Omega; L_2(I,\revKU{H^{-m}}(\square))\right)$
	\begin{equation}
		\Vert \revKU{r}_\mycirc \Vert_{L_2(I,\revKU{H^{-m}}(\mycirc))} 
        \le \Vert \revKU{r}_\Omega \Vert_{L_2(I,\revKU{H^{-m}}(\Omega))} 
        \le \Vert \revKU{r}_\square \Vert_{L_2(I,\revKU{H^{-m}}(\square))},
	\end{equation}
	setting $\revKU{r}_\mycirc := \astECirc \left(\revKU{r}_\Omega|_{L_2(I,\cV'(\Omega))}\right)$.
    \hfill\qed
\end{corollary}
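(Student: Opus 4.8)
The plan is to reduce Corollary~\ref{cor:boundsBochner} to Corollary~\ref{cor:boundsSobolev} by the same ``lift everything in time'' mechanism already exploited in Proposition~\ref{prop:LisBochner}. Since $\EstBox$ and $\EstCirc$ act pointwise in $t$ through the spatial isomorphisms $\EBox$, $\ECirc$, which are isometries by \S\ref{Subsec:Sobolev}, I expect the whole chain of constants to collapse to unity exactly as in the stationary case, so that the corollary is literally Theorem~\ref{th:mainBounds} with all constants equal to $1$, specialized to the Bochner setting.

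First I would record that $\cX(\mycirc)$, $\cY(\Omega)$, $\cZ(\square)$ are Hilbert spaces (being $L_2(I;\cdot)$ over Hilbert spaces), and that $\cU(\square):=L_2(I;U(\square))\subset\cZ(\square)$ and $\cV(\Omega):=L_2(I;\cV(\Omega))\subset\cY(\Omega)$ are closed subspaces carrying the ambient norm; Proposition~\ref{prop:LisBochner} already gives $\EstBox\in\cL_{\text{is}}(\cY(\Omega),\cU(\square))$, and the identical argument applied to $\ECirc$ gives $\EstCirc\in\cL_{\text{is}}(\cX(\mycirc),\cV(\Omega))$. Hence Assumption~\ref{assume:Isomorph} holds for the pair $(\cY(\Omega),\cZ(\square))$ with isomorphism $\EstBox$, and for the pair $(\cX(\mycirc),\cY(\Omega))$ with isomorphism $\EstCirc$. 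Next I would verify that these lifted operators inherit isometry: since $\Vert\EBox v(t)\Vert_{Z(\square)}=\Vert v(t)\Vert_{\cY(\Omega)}$ for a.e.\ $t$ (the stationary isometry \eqref{eq:Isometry}), integrating the squares over $I$ gives $\Vert\EstBox v\Vert_{\cZ(\square)}=\Vert v\Vert_{\cY(\Omega)}$, so $C_\square=1$ and, since $\RstBox$ is the pointwise inverse, $c_\square=1$; the same reasoning gives $c_\mycirc=C_\mycirc=1$ for $\EstCirc$.

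With Assumption~\ref{assume:Isomorph} verified and all constants equal to $1$, I would simply invoke Theorem~\ref{th:mainBounds} with $f_\square\in\mathcal{E}(\astRBox f_\Omega;\cZ'(\square))=\mathcal{E}(\astRBox f_\Omega;L_2(I,W_0^{-m,q}(\square)))$ and $f_\mycirc:=\astECirc(f_\Omega|_{\cV(\Omega)})$, obtaining
\begin{equation*}
	\Vert f_\mycirc \Vert_{L_2(I,W_0^{-m, q}(\mycirc))}
	\le \Vert f_\Omega \Vert_{L_2(I,W_0^{-m, q}(\Omega))}
	\le \Vert f_\square \Vert_{L_2(I,W_0^{-m, q}(\square))},
\end{equation*}
once I identify the dual spaces correctly: $(\cX(\mycirc))'=(L_2(I;W_0^{m,p}(\mycirc)))'\cong L_2(I;W_0^{-m,q}(\mycirc))$ and analogously for $\Omega$ and $\square$, using that $I$ is a bounded interval and reflexivity (or the standard Bochner duality $L_2(I;X)'\cong L_2(I;X')$).

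The main obstacle, and the only place requiring genuine care, is this last duality identification together with checking that the adjoints $\astRBox$, $\astECirc$ are indeed the pointwise-in-$t$ adjoints of the spatial operators $\aRBox$, $\aECirc$ — i.e.\ that $(\EstBox)'$ acts as $(f_\square\mapsto (t\mapsto \aEBox f_\square(t)))$ under the $L_2(I;X')\cong L_2(I;X)'$ identification. This follows from Fubini applied to the defining dual pairings, but it must be stated so that the ``set of extensions'' $\mathcal{E}(\astRBox f_\Omega;\cdot)$ in the corollary matches the one in Theorem~\ref{th:mainBounds}; the restriction $f_\Omega|_{\cV(\Omega)}$ likewise corresponds to $f_\Omega|_{L_2(I,\cV'(\Omega))}$ in the statement. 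Everything else is the verification that taking $L_2(I;\cdot)$ preserves the Hilbert/isometry structure, which is routine.
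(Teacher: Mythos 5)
Your proposal is correct and follows essentially the same route the paper intends: the corollary is presented as an immediate consequence of Proposition~\ref{prop:LisBochner} (the lifted operators are isomorphisms, in fact isometries, so all constants are unity) combined with the mechanism of Theorem~\ref{th:mainBounds} and the identification $L_2(I;X)'\cong L_2(I;X')$. Your additional care about the pointwise-in-$t$ action of the adjoints and the matching of the extension sets is exactly the routine verification the paper suppresses behind its \enquote{put everything together} remark.
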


\subsection{Approximation of a norm-preserving extension.}\label{Sec:ComputableExtensions} The upper bound in Theorem~\ref{th:mainBounds} requires \revKU{a norm-preserving} extension of $\aRBox \revKU{r}_\Omega\revKU{: \cU(\square)\to\R}$ in $\cU'(\square)$ to an element of $\cZ'(\square)$\revKU{, i.e., a linear functional on $\cZ(\square)\supset\cU(\square)$} (see also Figure~\ref{fig:extension}). The Hahn-Banach Extension \revKU{T}heorem ensures the existence of \revKU{such an extension} $\revKU{r}_\square^{\text{HB}} \in \cZ'(\square)$ with $\Vert \aRBox \revKU{r}_\Omega \Vert_{\cU'(\square)} = \Vert \revKU{r}_\square^{\text{HB}} \Vert_{\cZ'(\square)}$. However, \revKU{as already mentioned, } the theorem provides only an existence statement but it does not yield an explicit construction of such an extension.

In a Hilbert space setting, the norm-preserving extension $\revKU{r}_\square^{\text{HB}}$ can be constructed explicitly through the Riesz \revKU{Representation Theorem. However,} computing it would implicitly require resolving the geometry of $\Omega$, which contradicts our goal of performing all computations solely on the simple \revKU{domains $\mycirc$ and $\square$}. In this section, we develop computable realizations of such extensions \revKU{acting only on $\mycirc$ and $\square$, respectively}, together with explicit bounds that quantify the approximation quality of the proposed extension. 

\subsubsection{Optimal norm--preserving extensions in Hilbert spaces} We first recall that in Hilbert spaces the Hahn--Banach extension can be characterized explicitly through orthogonal projection. \revKU{Moreover, this extension is optimal in the sense that its norm is minimal w.r.t.\ all possible extensions.}

\begin{lemma}
\label{lem:HB-Hilbert-extension}
Let $\cZ(\square)$ be a real Hilbert space with inner product $(\cdot,\cdot)_\cZ$ and induced norm
$\|\cdot\|_{\cZ(\square)}$ and let Assumption \ref{assume:Isomorph} hold. Denote with $P_\cU: \cZ(\square)\to \cU(\square)$ the $\cZ(\square)$-orthogonal projection onto
$\cU(\square)$. For any $\reviewtwo{r}\in \cU'(\square)$, define the \revKU{Hahn-Banach} extension $\revKU{r}^{\text{HB}}_\square\in \cZ'(\square)$ by
\begin{equation}\label{eq:optextension}
    \revKU{r}^{\text{HB}}_\square(z) := \reviewtwo{r}\left(P_\cU z\right), \qquad z\in \cZ(\square),
\end{equation}
and $\revKU{r}^{\text{HB}}_\square$ is \revKU{the optimal} continuous extension of $\reviewtwo{r}$ from $\cU'(\square)$
to $\cZ'(\square)$ \revKU{in the sense that}
\begin{align*}
\|\revKU{r}^{\text{HB}}_\square\|_{\cZ'(\square)} = \|\reviewtwo{r}\|_{\cU'(\square)}.
\end{align*}
Moreover, $\revKU{r}^{\text{HB}}_\square$ has minimal $\cZ'(\square)$--norm among all extensions in $\mathcal{E}(\reviewtwo{r};\cZ'(\square))$.
\end{lemma}

\begin{proof}
Since $P_\cU$ is bounded with $\|P_\cU\|=1$ and $P_\cU|_{\cU(\square)} = \mathrm{Id}$,
$\revKU{r}^{\text{HB}}_\square\in$ is a bounded linear functional on $\cZ(\square)$ that
coincides with $\reviewtwo{r}$ on $\cU(\square)$, i.e., $\revKU{r}^{\text{HB}}_\square\in \mathcal{E}(\reviewtwo{r};\cZ'(\square))$ (see \eqref{def:ext}). Furthermore,
\begin{align*}
    \|\revKU{r}^{\text{HB}}_\square\|_{\cZ'(\square)} =
\sup_{z\neq 0}\frac{|\reviewtwo{r}(P_\cU z)|}{\|z\|_{Z(\square)}}
\le
\sup_{z\neq 0}\frac{\|\reviewtwo{r}\|_{\cU'(\square)}\|P_\cU z\|_{\cZ(\square)}}{\|z\|_{\cZ(\square)}}
\le \|\reviewtwo{r}\|_{\cU'(\square)}.
\end{align*}
Taking the supremum over $z\in \cU(\square)\setminus\{0\}$ yields the reverse inequality. 

To prove minimality, let $\review{r_\square} \in \mathcal{E}(\reviewtwo{r};\cZ'(\square))$. By the Riesz \revKU{Representation Theorem}, there exists $w \in \cZ(\square)$ such that $\revKU{r}_\square(z) = (w,z)_{\cZ(\square)},\,\forall z \in \cZ(\square)$. Since $\revKU{r_\square}\equiv \reviewtwo{r}$ on $\cU(\square)$, the orthogonal projection $P_{\cU}w$ is the Riesz representative of $\reviewtwo{r}$ in $\cU(\square)$. Therefore, 
\begin{align*}
\|w\|_{\cZ(\square)}^2
=
\|P_{\cU}w\|_{\cZ(\square)}^2
+
\|(I - P_{\cU})w\|_{\cZ(\square)}^2
\ge
\|P_{\cU}w\|_{\cZ(\square)}^2.
\end{align*}
Thus,
\begin{align*}
\|\revKU{r}_\square\|_{\cZ'(\square)}=\|w\|_{\cZ(\square)}
\ge
\|P_{\cU}w\|_{\cZ(\square)}
=
\|\revKU{r}^{HB}_{\square}\|_{\cZ'(\square)},
\end{align*}
which proves minimality.
\end{proof}

\begin{remark}
Lemma~\ref{lem:HB-Hilbert-extension} shows that in Hilbert spaces the optimal (Hahn--Banach) extension is realized by \revKU{the} orthogonal projection. The \revKU{challenge in practice} is therefore not \revKU{the} existence but the computation of $P_\cU$ \revKU{without explicitly resolving} the geometry of $\Omega$. \revKU{Recall, that $\cU(\square)$ is isomorphic to $\cY(\Omega)$. Moreover, we have seen in the examples that $\cU(\square)$ is defined there by the image of $\cY(\Omega)$ under an extension operator. This means, that $\cU(\square)$ (and thus $P_\cU$) require to resolve the geometry of $\Omega$, which we want to avoid.}
\end{remark}

\revKU{
\begin{remark}
Concerning an efficient implementation, we shall always assume that we can decide whether $x\in\square$ belongs to $\Omega$ or to $\square\setminus\Omega$ e.g.\ by an indicator function. This does not require a geometric resolution of $\Omega$ e.g.\ in terms of a triangulation. Think e.g.\ of $\Omega$ given in terms of a black and white image. This means that we assume to be able to perform quadrature on $\Omega$.
\end{remark}
}

\subsubsection{Penalized projection on $\cU(\square)$.}
We \revKU{describe an} approximation of the norm-preserving extension from Lemma~\ref{lem:HB-Hilbert-extension} \revKU{only requiring computations on $\square$}. The first step is to approximate the orthogonal projection onto $\cU(\square)\subset\cZ(\square)$ by solving a penalized variational problem posed on the enveloping domain $\square$\revKU{, which} avoids the explicit computation of $P_{\cU}$ \revKU{and resolving} the geometry of $\Omega$.

The variational problem involves a penalty term acting on the complement $\square\setminus\Omega$, which controls the distance to $\cU(\square)$. The required properties of this penalty are stated in the following assumption.

\begin{assumption}\label{penaltyterm}
Let $\cZ(\square)$ be a real Hilbert space with inner product $(\cdot,\cdot)_{\cZ(\square)}$ and let $\cU(\square)\subset \cZ(\square)$ be a closed subspace so that Assumption \ref{assume:Isomorph} holds. Assume that there exists a continuous, symmetric, positive semi-definite bilinear form 
$$
(\cdot,\cdot)_{\square\setminus\Omega} : \cZ(\square)\times \cZ(\square)\to \mathbb{R}
$$
such that
\revKU{
\begin{compactenum}[(i)]
\item $\cU(\square)=\{z\in \cZ(\square): (z,z)_{\square\setminus\Omega}=0\}$;
\item there exists a constant $C_{\mathrm{pen}}>0$ such that
\begin{equation}
\label{eq:pen-stability}
\|(I-P_\cU)z\|_{\cZ(\square)}\le C_{\mathrm{pen}}\,(z,z)_{\square\setminus\Omega}^{1/2}
\qquad \forall z\in \cZ(\square).
\end{equation}
\end{compactenum}}
\end{assumption}

\revKU{The first condition means that the bilinear form acts on the complement $\square\setminus\Omega$ and can be realized e.g.\ by integrating over the characteristic function $\chi_{\square\setminus\Omega}$. The second} condition \revKU{is} a coercivity property in $\square\setminus\Omega$, that is $(\cdot,\cdot)_{\square\setminus\Omega}$ controls the $\cZ(\square)$-distance to the constrained subspace $\cU(\square)$.

For $\lambda>0$, we \review{replace $P_\cU$ with the penalized projection operator $P_{\cU}^\lambda : \cZ(\square)\to \cZ(\square)$ defined by
$$
P_{\cU}^{\lambda}z
:=
\arg\min_{v\in\cZ(\square)}
\left(
\frac{1}{2}\|v-z\|_{\mathcal Z(\square)}^2
+
\frac{\lambda}{2}(v,v)_{\square\setminus\Omega}
\right),
$$ for $z\in\cZ(\square)$ fixed. D}efine the penalized bilinear form
\begin{equation}\label{penform}
a_\lambda(v,w) := (v,w)_{\cZ(\square)} + \lambda\, (v,w)_{\square\setminus\Omega},
\qquad v,w\in \cZ(\square)
\end{equation}
\review{Then, $P_{\cU}^{\lambda}z$ is, equivalently,} the solution of 
\begin{equation}\label{eq:projection}
a_\lambda(P_\cU^\lambda z, w) = (z,w)_{\cZ(\square)} \qquad \forall w\in Z(\square),
\end{equation}
\review{Thus, $P_{\cU}^{\lambda}$} provides a penalized approximation of the orthogonal projection $P_{\cU}$ onto $\cU(\square)$, as the penalty term enforces the constraint in the definition of $\cU(\square)$ asymptotically as $\lambda\to \infty$.

\begin{lemma}
\label{lem:penalized-projection}
For any $z\in \cZ(\square)$ the problem \eqref{eq:projection} admits a unique solution $P_\cU^\lambda z\in \cZ(\square)$. Moreover, $P_\cU^\lambda: \cZ(\square)\to \cZ(\square)$ is linear and \review{uniformly} bounded\revKU{, i.e., $\|P^\lambda_{\cU}\|_{\cL(\cZ(\square),\cZ(\square))}\le 1$.}
\end{lemma}
\begin{proof}
The bilinear form $a_\lambda(\cdot,\cdot)$ is continuous and coercive on
$\cZ(\square)$, since
$$a_\lambda(z,z) := \|z\|^2_{\cZ(\square)} + \lambda\, (z,z)_{\square\setminus\Omega}\ge \|z\|^2_{\cZ(\square)}$$
for any $z\in \cZ(\square)$, where we used that $(\cdot,\cdot)_{\square\setminus\Omega}$ is semi-positive definite.~The Lax-Milgram theorem implies uniqueness and continuous dependence of the solution from the right-hand side. The latter gives the stability estimate $\|P_\cU^\lambda z\|_{\cZ(\square)}
\le \|z\|_{\cZ(\square)}$. Hence $P_\cU^\lambda$ is bounded, and thus continuous. \review{In particular,
\begin{align*}
\|P^\lambda_{\cU}\|_{\cL(\cZ(\square),\cZ(\square))}:=\sup_{z\in\cZ(\square)\setminus{0}}\frac{\|P_\cU^\lambda z\|_{\cZ(\square)}}{\|z\|_{\cZ(\square)}}
\le 1,
\end{align*}
which completes the proof.}
\end{proof}
Next, we analyze the convergence of the penalized projection as $\lambda \to \infty$. 
\begin{proposition}
\label{prop:penalized-projection-convergence}
Let Assumption \ref{penaltyterm} hold, and let $z\in\cZ(\square)$ be fixed. Then,
$$
P_\cU^\lambda z \rightarrow P_\cU z \,\,\, \text{strongly in}\,\, \cZ(\square)\,\,\, \text{as } \lambda\to\infty.
$$
Moreover, the following estimates hold for all $\lambda>0$:
\begin{align}
\label{eq:leakage-est}
(P_\cU^\lambda z,P_\cU^\lambda z)^{1/2}_{\square\setminus\Omega} &\le \lambda^{-1/2}\,
\|z-P_\cU z\|_{\cZ(\square)},\\
\label{eq:Z-bound}
\|P_\cU^\lambda z-z\|_{\cZ(\square)} &\le \|P_\cU z-z\|_{\cZ(\square)}.
\end{align}
\end{proposition}

\begin{proof}
\emph{Step 1 (Variational characterization).}
For $\lambda>0$ define the strictly convex functional $J_\lambda:\cZ(\square)\to\mathbb{R}$ by
$$
J_\lambda(v):=\tfrac12\|v-z\|_{\cZ(\square)}^2+\tfrac{\lambda}{2}(v,v)_{\square\setminus\Omega}.
$$
A direct Euler--Lagrange computation shows that $u\in \cZ(\square)$ minimizes $J_\lambda$ if and only if
$$
(z-u,v)_{\cZ(\square)}=\lambda\, (u,v)_{\square\setminus\Omega} \qquad \forall v\in {\cZ(\square)},
$$
which is equivalent to \eqref{eq:projection}. Therefore, $u=P_\cU^\lambda z$ is the unique minimizer of
$J_\lambda$.

\emph{Step 2.}
Since $P_\cU z\in \cU(\square)$, it holds that
$J_\lambda(P_\cU z)=\frac12\|P_\cU z-z\|_{\cZ(\square)}^2$.
By minimality of $P_\cU^\lambda z$,
$$
\tfrac12\|P_\cU^\lambda z-z\|_{\cZ(\square)}^2+\tfrac{\lambda}{2}(P_\cU^\lambda z,P_\cU^\lambda z)_{\square\setminus\Omega}
=J_\lambda(P_\cU^\lambda z)\le J_\lambda(P_\cU z)=\tfrac12\|P_\cU z-z\|_{\cZ(\square)}^2,
$$
which implies \eqref{eq:leakage-est} by dropping the first term on the left.
The same inequality also yields \eqref{eq:Z-bound} by dropping the second term.

\emph{Step 3 (Weak convergence to $P_\cU z$).}
Estimate \eqref{eq:Z-bound} shows that $\{P_\cU^\lambda z\}_{\lambda>0}$ is bounded in $\cZ(\square)$. \revKU{Thus, for} any sequence $\lambda_n\to\infty$, there exists a subsequence, again denoted
by $\lambda_n$, and a $\bar u\in \cZ(\square)$ such that
$P_\cU^{\lambda_n} z\rightharpoonup \bar u$ weakly in $\cZ(\square)$.
By \eqref{eq:leakage-est} we have
$(P_\cU^{\lambda_n} z,P_\cU^{\lambda_n} z)_{\square\setminus\Omega}\to 0$. Since the map
$$
v \mapsto (v,v)_{\square\setminus\Omega}
$$
is convex and continuous, it is weakly lower semicontinuous on $\cZ(\square)$. Therefore,
$$
(\bar u,\bar u)_{\square\setminus\Omega}
\le
\liminf_{n\to\infty}
(P_\cU^{\lambda_n} z,P_\cU^{\lambda_n} z)_{\square\setminus\Omega}
=0.
$$
By nonnegativity of $(\cdot,\cdot)_{\square\setminus\Omega}$, this implies
$$
(\bar u,\bar u)_{\square\setminus\Omega} = 0
\qquad \text{on}\,\,\square\setminus\Omega,
$$
so $\bar u\in \cU(\square)$.
Moreover, for any $u\in \cU(\square)$ we have $J_{\lambda_n}(u)=\frac12\|u-z\|_{\cZ(\square)}^2$, and since $P_\cU^{\lambda_n}z$ minimizes $J_{\lambda_n}$,
$$
J_{\lambda_n}(P_\cU^{\lambda_n} z)\le J_{\lambda_n}(u)=\tfrac12\|u-z\|_{\cZ(\square)}^2.
$$
Dropping the nonnegative penalty term and taking $\liminf$ yields, by weak lower semicontinuity of
$v\mapsto\|v-z\|_{\cZ(\square)}^2$,
$$
\tfrac12\|\bar u-z\|_{\cZ(\square)}^2
\le \tfrac12\liminf_{n\to\infty}\|P_\cU^{\lambda_n} z-z\|_{\cZ(\square)}^2
\le \tfrac12\|u-z\|_{\cZ(\square)}^2 \qquad \forall u\in \cU(\square).
$$
Thus $\bar u$ minimizes $\|u-z\|_{\cZ(\square)}$ over $\cU(\square)$. By uniqueness of the orthogonal projection, $\bar u=P_\cU z$. Since every weakly convergent subsequence has the same limit, it follows
that
$$
P_\cU^\lambda z \rightharpoonup P_\cU z \quad \text{weakly in } {\cZ(\square)} \quad \text{as } \lambda\to\infty.
$$

\emph{Step 4 (Strong convergence).}
From \eqref{eq:Z-bound} we obtain $$\limsup_{\lambda\to\infty}\|P_\cU^\lambda z-z\|_{\cZ(\square)}\le \|P_\cU z-z\|_{\cZ(\square)}.$$
On the other hand, weak convergence yields the lower semicontinuity estimate
$$
\|P_\cU z-z\|_{\cZ(\square)} \le \liminf_{\lambda\to\infty}\|P_\cU^\lambda z-z\|_{\cZ(\square)}.
$$
Hence, $\|P_\cU^\lambda z-z\|_{\cZ(\square)}\to \|P_\cU z-z\|_{\cZ(\square)}$. Weak convergence together with convergence of norms in $\cZ(\square)$ yields strong convergence:
$$
\|P_\cU^\lambda z-P_\cU z\|_{\cZ(\square)} \to 0 \quad \text{as } \lambda\to\infty,
$$
\revKU{which completes the proof.}
\end{proof}
\subsubsection{Penalized extension}\label{subsec: Penalized extension} We recall that the restriction operator $\RBox:\cU(\square)\to \cY(\Omega)$ from Assumption \ref{assume:Isomorph} \revKU{needs to be} an isomorphism. \revKU{Then}, every functional $\revKU{r}_\Omega\in \cY'(\Omega)$ induces a functional $\reviewtwo{r}=\aRBox \revKU{r}_\Omega \in \cU'(\square)$ by 
$$
\langle \RBox' \revKU{r}_\Omega, u\rangle_{\cU(\square)} = \langle \revKU{r}_\Omega,\RBox u\rangle_{\cY(\Omega)},
\quad \forall u\in\cU(\square),\ \forall \revKU{r}_\Omega\in\cY'(\Omega).
$$
Equivalently $\forall z\in\cZ(\square)$, the (exact) norm preserving (cf.~with \eqref{eq:optextension}) extension of $\reviewtwo{r}$ \review{to $\cZ'(\square)$, i.e.~$\revKU{r}^{\text{HB}}_\square$,} satisfies
\begin{align}\label{exact}
\reviewmath{\revKU{r}^{\text{HB}}_\square(z)}
= \langle \reviewtwo{r}, P_\cU z\rangle_{\cU(\square)}
= \langle \revKU{r}_\Omega, \RBox(P_\cU z)\rangle_{\cY(\Omega)}.
\end{align}

Once the penalized projection is available, it would be convenient to define the penalized extension by replacing the exact projection $P_\cU$ in \eqref{exact} with its penalized counterpart  $P_\cU^\lambda$. However, note that $P_\cU^\lambda z \notin \revKU{\cU}(\square)$ \reviewtwo{in general}, so \review{the expression $\RBox(P_\cU^\lambda z)$ \revKU{analogous to \eqref{exact}} is not well-defined for arbitrary $z\in\cZ(\square)$.}. To overcome this, \review{consider a continuous extension of the restriction operator $\RBox$ such that\footnote{Such an extension is \revKU{in general} not unique.}
$$
\tRBox:\cZ(\square)\to\cY(\Omega),\,\,\,\textit{such that}\,\,\,\RBox = \tRBox|_{\cU(\square)}.
$$}%
\revKU{
Its adjoint $\atRBox:\cY'(\Omega)\to\cZ'(\square)$ is defined as usual by $\dual{\atRBox r_\Omega,z}_{\cZ(\square)}=\dual{r_\Omega,\tRBox z}_{\cY(\Omega)}$ for all $r_\Omega\in\cY'(\Omega)$ and $z\in\cZ(\square)$. Then, for any $r_\Omega\in\cY'(\Omega)$, the penalized extension $r_\square^\lambda$ is defined by
}
\revKU{
\begin{equation}\label{penalized_ext}
\revKU{r}^{\lambda}_\square(z)
:=
\langle \revKU{r}_\Omega, \tRBox(P_\cU^\lambda z)\rangle_{\cY(\Omega)},
\quad z\in\cZ(\square),
\end{equation}
which it is well-defined for every $\lambda>0$\review{, in the sense that $\reviewmath{\revKU{r}^{\lambda}_\square}|_{\cU(\square)}=\review{r}$.} This, in turn, follows from $P^\lambda_{\cU}u=u$ for $u\in\cU(\square)\reviewmath{\subset \cZ(\square)}$ and $\tRBox|_{\cU(\square)}=\RBox$. Apparently, $r_\square^\lambda = (P_\cU^\lambda)' \atRBox r_\Omega$, where $(P_\cU^\lambda)': \cZ'(\square)\to\cZ'(\square)$ is the adjoint of $P_\cU^\lambda$.
} 

\begin{corollary}\label{ext_convergence}
Let Assumption \ref{penaltyterm} hold\revKU{, }$z\in\cZ(\square)$ \revKU{fixed}, $\tRBox$ as above, and $P_\cU^\lambda z \to P_\cU z$ in $\cZ(\square)$ as $\lambda\to\infty$. Then, \review{for any $\revKU{r}_\Omega \in\cY'(\Omega)$} we have
$$
\reviewmath{\revKU{r}^{\lambda}_\square(z)
\revKU{= ((P_\cU^\lambda)' \atRBox r_\Omega) (z)}
\to \revKU{r}^{\text{HB}}_\square(z)},
\qquad z\in\cZ(\square).
$$
\end{corollary}
\review{\begin{proof}
By Proposition \ref{prop:penalized-projection-convergence}, $P^\lambda_{\cU}z\to P_{\cU}z$ in $\cZ(\square)$. Since $\tRBox\in \cL(\cZ(\square),\cY(\Omega))$, it follows that
$$
  \tRBox(P^\lambda_{\cU}z)\to \tRBox(P_{\cU}z)\qquad\text{in }\cY(\Omega).
$$
Moreover, $P_{\cU}z\in\cU(\square)$ implies $\tRBox(P_{\cU}z)=\RBox(P_{\cU}z)$. Applying the latter to \eqref{penalized_ext} we obtain
\begin{align*}
\reviewmath{\revKU{r}^{\lambda}_\square(z)}
&=
\langle
\revKU{r}_\Omega,
\tRBox(P^\lambda_{\cU}z)
\rangle_{\cY(\Omega)}
\to
\left\langle
\revKU{r}_\Omega,
\RBox(P_{\cU}z)
\right\rangle_{\cY(\Omega)}
=
\reviewmath{\revKU{r}^{\text{HB}}_\square(z)},
\end{align*}
which completes the proof.
\end{proof}}

\review{To quantify the norm deterioration caused by $\revKU{r}_\square^\lambda$ \revKU{for a given extension $r=\aRBox r_\Omega \in \cU'(\square)$}, we define its \emph{extension quality constant} as the maximal ratio of $\|r_\square^\lambda\|_{\cZ'(\square)}$ and $\|r\|_{\cU'(\square)}$. In order to do so, note, that 
\begin{align*}
r_\square^\lambda &= (P_\cU^\lambda)' \atRBox r_\Omega= (P_\cU^\lambda)'\atRBox(\aRBox)^{-1}r=: Q^\lambda_{\Omega,\square}r.
\end{align*}
Then, define the  extension quality constant as
\begin{equation}\label{constant_E}
C_\square^\lambda:=
\revKU{
\| Q^\lambda_{\Omega,\square}\|_{\cL(\cU'(\square),\cZ'(\square))}
=
}
\sup_{0\neq \review{r}\in \cU'(\square)} 
\frac{\|\revKU{(P_\cU^\lambda)'\atRBox(\aRBox)^{-1}r}\|_{\cZ'(\square)}}{\|\review{r}\|_{\cU'(\square)}} \ge 1.
\end{equation}
For the norm--preserving extension $\revKU{r}^{\text{HB}}_\square$ the corresponding extension quality constant equals $1$.~Note that $C_\square^\lambda$ is distinct from the isomorphism constant $C_\square$ in Lemma \ref{lemma:extBoundsBanachSpacev1}. 
The constant $C^\lambda_\square$ can be estimated starting from definition \eqref{penalized_ext},   
\begin{align*}
|\revKU{r}^{\lambda}_\square(z)|
&\le
\|\revKU{r}_\Omega\|_{\cY'(\Omega)}\|\tRBox\|_{\cL(\cZ(\square),\cY(\Omega))}\|P^\lambda_{\cU}z\|_{\cZ(\square)}
\\
&\le
C_\square\|\review{r}\|_{\cU'(\square)}\|\tRBox\|_{\cL(\cZ(\square),\cY(\Omega))}
\|P^\lambda_{\cU}\|_{\cL(\cZ(\square),\cZ(\square))}\|z\|_{\cZ(\square)}.
\end{align*}
Here we used continuity of $\tRBox$ in the first step, and the norm-equivalence from Lemma \ref{lemma:extBoundsBanachSpacev1} in the second. Moreover, the penalized projection $P^\lambda_{\cU}$ is uniformly bounded (see Lemma \ref{penform}). Therefore, taking the supremum over $z\in\cZ(\square)\setminus\{0\}$ and rearranging, we get the bound
\begin{equation}\label{constant}
\|\revKU{r}^{\lambda}_\square\|_{\cZ'(\square)}\le C_\square\|\tRBox\|_{\cL(\cZ(\square),\cY(\Omega))}\|\review{r}\|_{\cU'(\square)}.
\end{equation}
On the other hand,
\begin{align*}
\|\revKU{r}^{\lambda}_\square\|_{\cZ'(\square)}
=
\sup_{z\in \cZ(\square)\setminus\{0\}}
\frac{|\revKU{r}^{\lambda}_\square(z)|}{\|z\|_{\cZ(\square)}}
&\ge
\sup_{u\in \cU(\square)\setminus\{0\}}
\frac{|\revKU{r}^{\lambda}_\square(u)|}{\|u\|_{\cZ(\square)}}
=\\
&=\sup_{u\in \cU(\square)\setminus\{0\}}
\frac{|\review{r}(u)|}{\|u\|_{\cZ(\square)}}
=
\|\review{r}\|_{\cU'(\square)},
\end{align*}
where the crucial last equality is an immediate consequence of the extension property $\revKU{r}^{\lambda}_\square|_{\cU(\square)}=\review{r}$. 
Combining the last two inequalities, 
\begin{equation}\label{constant_E_lambda}
  1\le C_\square^\lambda
  \le
  C_\square
  \|\tRBox\|_{\cL(\cZ(\square),\cY(\Omega))}.
\end{equation}
Observe that the deterioration constant $C_\square^\lambda$ is bounded independently of $\lambda$.
}

\subsubsection{\revKU{Example: elliptic PDEs,  $\cY(\Omega)=H^{\reviewmath{1}}_0(\Omega)$}}
\label{subsec: computable Penalized extension}
Building on the results of \S\ref{Sec:Extension}, we now specialize the abstract construction to the case
$$
\cY(\Omega)=H^{\reviewmath{1}}_0(\Omega), \,\,\cZ(\square)=H^{\reviewmath{1}}_0(\square)\reviewmath{=\cY(\square)},
$$
equipped with \revKU{the} inner product $\revKU{(z,w)_{\cZ(\square)} := (\nabla z,\nabla w)_{L_2(\square)}}$ inducing the energy norm on $H^{\reviewmath{1}}_0(\square)$. \revKU{In order to satisfy Assumption \ref{penaltyterm}, we choose
\begin{align*}
    (v,w)_{\square\setminus\Omega} := (\nabla v, \nabla w)_{L_2(\square\setminus\Omega)},
    \quad v,w\in\cZ(\square)=H^1_0(\square),
\end{align*}
and define (in view of Assumption \ref{penaltyterm}(i))
\begin{align*}
    \cU(\square)
    &:=\{z\in H^1_0(\square): (\nabla z,\nabla z)_{\square\setminus\Omega}=0\}
    =\{z\in H^1_0(\square): \nabla z|_{\square\setminus\Omega}=0 \text{ a.e.}\}.
\end{align*}
The property $\nabla z|_{\square\setminus\Omega}=0$ a.e.~ together with the homogeneous boundary condition on $\partial\square$ yields $z = 0$ a.e.\ in $\square \setminus \Omega$. Thus, $\cU(\square)$ coincides with the space of zero extensions of $H^1_0(\Omega)$-functions, as defined in \eqref{eq:UdefSobolev}.
In order to verify Assumption \ref{penaltyterm}(ii), we consider the space 
\begin{align*}
    H^1_{\partial\square}(\square\setminus\Omega):=\{z\in H^1(\square\setminus\Omega):z|_{\partial\square}=0\},
\end{align*} 
equipped with the norm $\|\cdot\|_{H^1_{\partial\square}(\square\setminus\Omega)}=(\cdot,\cdot)_{\square\setminus\Omega}^{1/2}$, and the restriction operator 
\begin{align*}
    R_{\square\setminus\Omega\leftarrow\square}: H^1_0(\square)\to H^1_{\partial\square}(\square\setminus\Omega), 
    \quad 
    \text{ defined as }
    \quad
    R_{\square\setminus\Omega\leftarrow\square}\, z
    := z|_{\square\setminus\Omega}.
\end{align*}
Note, that $R_{\square\setminus\Omega\leftarrow\square}$ is not injective, and its kernel is precisely the zero-extension subspace $\cU(\square)$, i.e., $\cU(\square)=\mathop{ker} (R_{\square\setminus\Omega\leftarrow\square})$.
In addition, we choose a continuous linear extension $E_{\square\setminus\Omega\rightarrow\square}:H^1_{\partial\square}(\square\setminus\Omega)\to H^1_0(\square)$, such that $R_{\square\setminus\Omega\leftarrow\square}\circ E_{\square\setminus\Omega\rightarrow\square}=\id_{H^1_{\partial\square}(\square\setminus\Omega)}$ (see Remark \ref{harmonic} below).\footnote{\review{Zero-extension is not appropriate in this case, since a function in $H^1(\square\setminus\Omega)$ need not vanish on $\partial\Omega$. Instead, we can use a bounded extension operator for instance obtained by extending the trace on $\partial\Omega$ harmonically into $\Omega$ and keeping the original function on $\square\setminus\Omega$ (see Remark \ref{harmonic}).}} 
By continuity, defining the constant $C_{\square\setminus\Omega}:=\|E_{\square\setminus\Omega\rightarrow\square}\|_{\cL\left(H^1_{\partial\square}(\square\setminus\Omega),H^1_0(\square)\right)}$, yields
\begin{align}\label{E_cont}
    \|E_{\square\setminus\Omega\rightarrow\square}z\|_{H^1_0{(\square)}}
    \le  C_{\square\setminus\Omega}\, \|z\|_{H^1_{\partial\square}(\square\setminus\Omega)}
    = C_{\square\setminus\Omega}\, (z,z)_{\square\setminus\Omega}^{1/2}. 
\end{align}
The idea is to subtract from $z\in H^1_0(\square)$ a function with the same restriction to $\square\setminus\Omega$, i.e., for $z\in H^1_0(\square)$, we set
\begin{align*}
    u_z := z-E_{\square\setminus\Omega\rightarrow\square} R_{\square\setminus\Omega\leftarrow\square}\, z.
\end{align*}
Then, $R_{\square\setminus\Omega\leftarrow\square}\, u_z=0$, thus, $u_z\in\cU(\square)$.\footnote{\review{In general, $E_{\square\setminus\Omega\rightarrow\square} R_{\square\setminus\Omega\leftarrow\square}z\neq z$, but it is an extension of $R_{\square\setminus\Omega\leftarrow\square}z\in H^1_{\partial\square}(\square\setminus\Omega)$ to $H^1_0{(\square)}$.}} Since $P_\cU z$ is the best approximation to $z$ from $\cU(\square)$, we obtain
\begin{align*}
    \|(I-P_\cU)z\|_{H^1_0(\square)}
    &= \inf_{u\in\cU(\square)}\|z-u\|_{H^1_0(\square)}
    \le \|z-u_z\|_{H^1_0(\square)}\\
    &=\|E_{\square\setminus\Omega\rightarrow\square} R_{\square\setminus\Omega\leftarrow\square}\, z\|_{H^1_0(\square)}\\
    &\le C_{\square\setminus\Omega} \|R_{\square\setminus\Omega\leftarrow\square}\, z\|_{H^1_{\partial\square}(\square\setminus\Omega)}
    = C_{\square\setminus\Omega}\, (z,z)_{\square\setminus\Omega}^{1/2}.
\end{align*}
Hence, Assumption \ref{penaltyterm}(ii) holds with constant $C_{\text{pen}}=C_{\square\setminus\Omega}$. 

\begin{remark}[Harmonic extension]\label{harmonic}
    We bound the constant $C_{\square\setminus\Omega}$ for a particular choice of $E_{\square\setminus\Omega\rightarrow\square}$: For $w\in H^1_{\partial\square}(\square\setminus\Omega)$, we define a \emph{harmonic extension} as 
    \begin{align*}
        E_{\square\setminus\Omega\to\square}\, w
        :=
          \begin{cases}
                w, & \text{in } \square\setminus\Omega,\\
                h_w, & \text{in } \Omega,
          \end{cases}
    \end{align*}
where $h_w\in H^1(\Omega)$ is harmonic in $\Omega$ and satisfies $h_w|_{\partial\Omega}=w|_{\partial\Omega}.$ Obviously,
$\|E_{\square\setminus\Omega\to\square}w\|_{H^1_0(\square)}^2
  = \|\nabla w\|_{L_2(\square\setminus\Omega)}^2
  + \|\nabla h_w\|_{L_2(\Omega)}^2$. 
By the stability of the harmonic extension, there exists a constant $C_{\rm harm}>0$ such that the estimate $\|\nabla h_w\|_{L_2(\Omega)} \le C_{\rm{harm}}\, \|w|_{\partial\Omega}\|_{H^{1/2}(\partial\Omega)}$ holds. Applying a trace estimate on $\square\setminus\Omega$ yields the bound  $\|w|_{\partial\Omega}\|_{H^{1/2}(\partial\Omega)} \le C_{\rm{tr}}\,\|w\|_{H^1(\square\setminus\Omega)}$. Since $w$ vanishes on the outer boundary $\partial\square$, a Poincar\'e inequality on $\square\setminus\Omega$ gives $\|w\|_{H^1(\square\setminus\Omega)} \le C_{\rm{P}}\,\|\nabla w\|_{L_2(\square\setminus\Omega)}$, where $C_{\rm{P}}$ is the Poincar\'e constant associated to the domain  $\square\setminus\Omega$. Combining the last three estimates yields
$\|\nabla h_w\|_{L_2(\Omega)}
  \le C_{\rm{harm}}\, C_{\rm{tr}}\, C_{\rm{P}}\,
  \|\nabla w\|_{L_2(\square\setminus\Omega)}$ and finally 
\begin{align*}
  \|E_{\square\setminus\Omega\to\square}w\|_{H^1_0(\square)}^2
  &= \|\nabla w\|_{L_2(\square\setminus\Omega)}^2
    + \|\nabla h_w\|_{L_2(\Omega)}^2\\
  &\le
  \left(
    1+C_{\rm{harm}}^2\, C_{\rm{tr}}^2\, C_{\rm{P}}^2
  \right)
    \|\nabla w\|_{L_2(\square\setminus\Omega)}^2
    =: C_{\square\setminus\Omega}^2  \|\nabla w\|_{L_2(\square\setminus\Omega)}^2,
\end{align*}
i.e., \eqref{E_cont} and the constant only depends on the geometry of $\Omega$, $\square$ and $\square\setminus\Omega$ through the trace, harmonic extension, and Poincar\'e constants.
\end{remark}
}

Now, given $z\in \revKU{H^1_0(\square)}$, the penalized projection $P_\cU^\lambda z \in \cZ(\square)$ \revKU{is defined by
\begin{align*}
    a_\lambda(P_\cU^\lambda z,w)
    &= (\nabla P_\cU^\lambda z, \nabla w)_{L_2(\square)} + \lambda\, (\nabla P_\cU^\lambda z, \nabla w)_{L_2(\square\setminus\Omega)}
    = (\nabla z, \nabla w)_{L_2(\square)}\\
    &= (z,w)_{\cZ(\square)}
\end{align*}
for all $w\in H^1_0(\square)$. By minimizing $J_\lambda$ on $\cZ(\square)$, the penalty term suppresses the energy of $P_\cU^\lambda z$ on $\square\setminus\Omega$, so that $P_\cU^\lambda z$ approximates the orthogonal projection onto $\cU(\square)$ as $\lambda\to\infty$.}

Let \review{$r_\Omega \in \cY'(\Omega)=(H^{1}_0(\Omega))'=H^{-1}(\Omega)$} be \revKU{given}. \review{\revKU{Then, consider some} continuous extension\footnote{\review{The choice of $\bar{r}_\Omega\in (H^{1}(\Omega))'$ is not unique. For instance, if $\bar{r}_\Omega$ is one extension, then for any $g\in H^{-1/2}(\partial\Omega)$ the functional $v\mapsto \bar{r}_\Omega(v) + \langle g,v|_{\partial\Omega}\rangle_{H^{1/2}(\partial\Omega)}$ on $H^1(\Omega)$ defines another extension, since the trace vanishes on $H^1_0(\Omega)$.}}
$$\bar{r}_\Omega\in (H^{1}(\Omega))', \quad \bar{r}_\Omega|_{H^1_0(\Omega)}=r_\Omega.$$
Aiming for a simple implementation, we define the restriction operator,
\begin{equation}\label{def:restriction}
\tRBox: H^1_0(\square)\to H^1(\Omega), \quad \tRBox z := z|_\Omega.
\end{equation}
We stress that $\tRBox z$ need not belong to $H^1_0{(\Omega)}$, for $z\in H^1_0(\square)$.}

Then, motivated by \eqref{penalized_ext}\review{, a practical} penalized extension of \review{$r_\square=\aRBox r_\Omega\in\cU'(\square)$ can be written as
\begin{equation}\label{eq:computablePenResidual}
    r_\square^\lambda(z):=\langle \bar{r}_\Omega, (P_\cU^\lambda z)|_\Omega\rangle_{H^{1}(\Omega)},
\quad z\in H^1_0(\square)
\end{equation}
In fact, this expression is well-defined, since
$$P_\cU^\lambda z \in H^{1}_0(\square) 
\revKU{\quad\text{implies that}\quad}
\tRBox(P_\cU^\lambda z)\in H^{1}(\Omega).$$
Moreover, if $z\in\cU(\square)$ then 
$$\tRBox(z)=\RBox(z)\in H^{1}_0(\Omega)$$
and, since in this case $P_\cU^\lambda z=z\in\cU(\square)$, \eqref{eq:computablePenResidual} yields
\begin{align*}
r_\square^\lambda(z)&=\langle \bar{r}_\Omega,z|_\Omega\rangle_{H^{1}(\Omega)}\\
&=\langle r_\Omega,z|_\Omega\rangle_{H^{1}_0(\Omega)}\qquad\,\,(z|_\Omega\in H^1_0(\Omega),\,\,\bar{r}_\Omega|_{H^1_0(\Omega)}=r_\Omega)\\
&=\langle \aRBox r_\Omega,z\rangle_{\cU(\square)} \quad(z|_\Omega=\RBox(z)\,\,\textit{and by definition of}\,\,\aRBox)
\end{align*}
This validates that $r^\lambda_\square\in\cZ'(\square)=(H^{1}_0(\square))'$, defined in \eqref{eq:computablePenResidual}}, coincides with $\aRBox r_\Omega$ on $\cU(\square)$, therefore, it is an extension of the latter.
\begin{remark}\label{Practical computation}
The operator $P_\cU^\lambda$ is obtained by solving a standard elliptic problem on the simple domain $\square$, with the penalty term being realised as a volume integral on $\square\setminus\Omega$ resolved by an approximate characteristic mask $\chi_{\square\setminus\Omega}$. The duality pairing \review{$\langle \bar{r}_\Omega, (\cdot)|_\Omega \rangle_{H^1(\Omega)}$} in \eqref{eq:computablePenResidual} is given by integrals over $\Omega$, so that it can be evaluated on $P_\cU^\lambda z\in \cZ(\square)$ without explicit evaluation of $\tRBox$, only by omitting values outside $\Omega$.
Hence, the construction avoids meshing $\Omega$ while providing a tunable, computable approximation of the norm--preserving extension.
\end{remark}
\review{\begin{remark}
In the elliptic case considered here, the estimate \eqref{constant_E_lambda} can be made explicit. Since $\cU(\square)$ is the zero-extension image of $H^1_0(\Omega)$, the extension operator $\EBox: H_0^{1}(\Omega) \rightarrow \cU(\square)$ is an isometry. Hence, $C_\square=1$ (cf. \eqref{eq:Isometry}). Moreover, for $\tRBox(z)=z|_\Omega$ as defined earlier in \eqref{def:restriction}, the Poincar\'{e} inequality gives
$$
\|\tRBox(z)\|_{H^1(\Omega)}\leq (1+C^2_{\text{P}})^{1/2}\|z\|_{H^1_0{(\square)}},
$$
where $C_{\text{P}}$ denotes the Poincar\'{e} constant on $\square$. 
Consequently, the estimate $\eqref{constant_E_lambda}$ in this case reads as
\begin{equation}\label{est:pen ext}
1\leq C_\square^\lambda\le (1+C^2_{\text{P}})^{1/2}
\end{equation}
and the deterioration constant is bounded independently of $\lambda$. 
\end{remark}}
Our final objective is to compute the Riesz representative $\hat r_\square^\lambda\in H^1_0(\square)$ of $r_\square^\lambda$, satisfying
\begin{equation}\label{riesz}
(\nabla {\hat r}_\square^\lambda,\nabla z)_{\revKU{L_2}(\square)}
=
r_\square^\lambda(z)
\qquad \forall z\in H^1_0(\square).
\end{equation}
We \review{now} show that the Riesz representative \review{of the penalized extension $r_\square^\lambda$} \review{is} equivalently characterized as the unique solution of a \revKU{simple} variational problem, given as
\begin{equation}\label{riesz_one}
(\nabla \hat r_\square^\lambda,\nabla v)_{\revKU{L_2}(\square)}
+
\lambda (\nabla \hat r_\square^\lambda,\nabla v)_{\revKU{L_2}(\square\setminus\Omega)}
=
\reviewmath{\langle \bar{r}_\Omega,v|_\Omega\rangle_{H^{1}(\Omega)}}
\qquad \forall v\in H^1_0(\square).
\end{equation}
\review{L}et $\reviewmath{w}\in H^1_0(\square)$ denote the unique solution of
$$
a_\lambda(\reviewmath{w},v)=\reviewmath{\langle \bar{r}_\Omega,v|_\Omega\rangle_{H^{1}(\Omega)}}
\qquad \forall v\in H^1_0(\square).
$$
The problem is well-posed by Lax--Milgram, since $a_\lambda$ is continuous and coercive on $H^1_0(\square)$, and the right-hand side is continuous \review{because} $\bar{r}_\Omega\in (H^1(\Omega))'$ \review{and the restriction $H^1_0(\square)\to H^1(\Omega)$ is continuous}.

We claim that $\reviewmath{w}$ is the Riesz representative of the penalized extension  $r_\square^\lambda$.
Indeed, testing the above equation with $v=P_\cU^\lambda z$, for any $z\in H^1_0(\square)$, yields
$$
a_\lambda(\reviewmath{w},P_\cU^\lambda z)
=
\reviewmath{\langle \bar{r}_\Omega,(P_\cU^\lambda z)|_\Omega\rangle_{H^{1}(\Omega)}}
=r_\square^\lambda(z).$$ 
Furthermore, testing equation \eqref{eq:projection} with $\reviewmath{w}$, we obtain
$$
a_\lambda(P_\cU^\lambda z,\reviewmath{w})
=
(z,\reviewmath{w})_{\cZ(\square)}
=
(\nabla z,\nabla \reviewmath{w})_{\revKU{L_2}(\square)}.
$$
\review{Since $a_\lambda$ is symmetric,} \review{t}he last two equations show that
$$(\nabla \reviewmath{w},\nabla z)_{\revKU{L_2}(\square)}=r_\square^\lambda(z)\qquad \forall z\in H^1_0(\square).
$$
\review{Therefore}, $\reviewmath{w}$ solves \eqref{riesz}\smallskip.

\review{\begin{remark}\label{exponential}
The penalized Riesz problem~\eqref{riesz_one} is an elliptic interface problem on $\square$. The coefficient
$1+\lambda\chi_{\square\setminus\Omega}$ is constant on $\Omega$ and on $\square\setminus\overline{\Omega}$, but discontinuous across $\partial\Omega$. Consequently, the Riesz representative is generally only piecewise smooth, and a spectral discretization based on globally smooth basis functions over the whole domain $\square$ should not, in general, be expected to retain exponential convergence. In addition, the conditioning of the discrete linear system may deteriorate as $\lambda$ increases, unless a suitable preconditioner is employed.

On the other hand, the advantage of the embedding is that the auxiliary problem is posed on the geometrically simple domain $\square$, where structured approximation spaces, tensor-product quadrature, fast transforms, and efficient solvers remain available. When the interface-induced loss of regularity limits a global spectral approximation, one may employ spectral-element or domain-decomposition methods with subdomains aligned with $\partial\Omega$, or use finite-element, finite-difference, or multigrid discretizations on the background domain. Appropriate preconditioning may be used to control the effect of the coefficient contrast. Thus, the interface and the penalization parameter may affect the approximation rate and the computational cost of the auxiliary Riesz solve, but not the validity of the estimator.  
\end{remark}

\begin{remark}[Mixed boundary conditions]\label{mixed}
The \revKU{presented} framework is not restricted to  homogeneous Dirichlet problems. If $\partial\Omega=\Gamma_D\cup\Gamma_N$, with Dirichlet data imposed essentially on $\Gamma_D$ and Neumann data imposed variationally on $\Gamma_N$, one may replace $H^1_0(\Omega)$ by
$H^1_{\Gamma_D}(\Omega):= \{v\in H^1(\Omega): v|_{\Gamma_D}=0\}$.
The residual then belongs to $\cY'(\Omega)=(H^1_{\Gamma_D}(\Omega))'$, and contains a Neumann boundary functional, with $\dual{\cdot,\cdot}_{\Gamma_N}$ denoting the corresponding duality pairing on $\Gamma_N$, e.g.
$$
  \dual{r_\Omega(u^\theta),v}_{H^1_{\Gamma_D}(\Omega)}
  =
  (f,v)_{L_2(\Omega)}
  +
  \langle g_N,v\rangle_{\Gamma_N}
  -
  a(u^\theta,v),
  \qquad v\in H^1_{\Gamma_D}(\Omega)
$$
where $a(\cdot,\cdot)$ denotes the variational bilinear form associated with the PDE operator.  The extension/restriction construction can be done with $\cY(\Omega)=H^1_{\Gamma_D}(\Omega)$, provided a bounded extension operator $\EBox:H^1_{\Gamma_D}(\Omega)\to \cZ(\square)$ is chosen such that the associated restriction map is a bounded inverse on its range $\cU(\square):=\EBox H^1_{\Gamma_D}(\Omega)$. With this choice, Assumption~\ref{assume:Isomorph} is satisfied. Such extension operators are available, for instance, when $\Omega$ is a bounded domain with smooth boundary, see \cite[Chapter~8]{LionsMagenes1972}.
\end{remark}}

\subsubsection{\revKU{Example: parabolic PDEs, Bochner spaces, $\cY(\Omega)=L_2(I; H^{\reviewmath{1}}_0(\Omega))$}}
\label{subsec:computable-space-time}
For space-time formulations, all \revKU{above} developments carry over analogously to Bochner spaces using the space-time extension and restriction operators developed in \S\ref{Subsec:Bochner}. The construction extends to $\cY(\Omega) = \revKU{L_2}(I;H^1_0(\Omega))$ and $\cZ(\square) = \revKU{L_2}(I;H^1_0(\square))$, with the penalized projection being applied pointwise in time, i.e.,
$$
(P_\cU^\lambda v)(t) := P_\cU^\lambda(v(t)) \quad \text{for}\,\,t\in I\,\,\text{a.e.}.
$$  
Then, for instance, the \revKU{analogue} of \eqref{riesz_one} reads as 
\begin{equation*}
\int_I (\nabla \hat r_{\square}^\lambda(t),\nabla v(t))_{\revKU{L_2}(\square)}\,dt
+
\lambda\kern-3pt \int_I (\nabla \hat r_{\square}^\lambda(t),\nabla v(t))_{\revKU{L_2}(\square\setminus\Omega)}\,dt
\kern-1pt=\kern-4pt
\int_I\kern-1pt\reviewmath{\langle \bar{r}_{\Omega}(t),v(t)|_\Omega\rangle_{H^1(\Omega)}} \,dt
\end{equation*}
for all $v\in \revKU{L_2}(I;H^1_0(\square))$.

\section{Numerical experiments} \label{sec:NumResults}
We investigate the quantitative performance of the proposed error estimators for linear elliptic and parabolic PDEs. In order to do so, we compare the quantities listed in the following table.
\begin{center}
\includegraphics{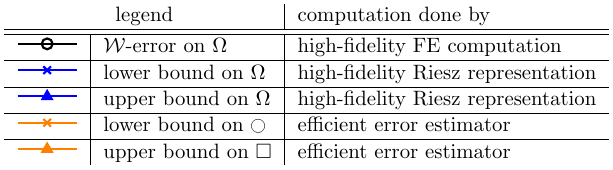}
\end{center}

The \enquote{true} $\mathcal{W}$-error (black) is computed \revKU{in $\cW(\Omega)$} by comparing the NN solution with a high-fidelity finite element reference solution. The upper and lower bounds in $\Omega$ (blue lines) are determined by using the Riesz representation w.r.t.\ a fine discretization of the PDE on the domain $\Omega$ and computing the dual norm of the residual by determining the primal norm of the Riesz representation. For all presented examples, we derive analytical estimates for the involved constants. Of course, this could also be replaced by solving corresponding generalized eigenvalue problems. These \enquote{Riesz estimates} on $\Omega$ are used as a reference only, as those bounds require high computational cost. It is clear that our error estimates cannot be better than the Riesz bounds on $\Omega$.

In order to compute our error estimator (orange), we need to solve the PDE on $\mycirc$ and $\square$. Hence, we choose these domains in such a way that their geometry is rather simple, e.g.\ a circle or a rectangle allowing for highly efficient tensor product discretizations in spherical and canonical coordinates, respectively. For those, we used highly efficient and accurate spectral methods, \cite{Canuto2006}. All experiments have been carried out in Python with FEniCSx\,\cite{Baratta2023} and PyTorch\,\cite{NEURIPS2019_9015}. Our code can be found on a \texttt{git} repository, \cite{codeGIT}.

\subsection{Dependence of the domain size}
We start by a problem in space only, i.e., an elliptic problem posed on a rather simple domain $\Omega := \lbrace (x,y)^T \in \mathbb{R}^2: x^2 + y^2 < 0.5^2 \rbrace$ being the circle shown in Figure\,\ref{fig:circleDomain}. Our goal is to investigate numerically how the upper and lower bound depend on the size of the domain. Moreover, we would like to know how the penalty changes the behaviour of the upper bound with respect to the domain size. In order to do so, we choose a parametric domain $(-\ell, \ell)^2$ \review{of} side length $2\ell \in (0,\infty)$\review{, and distinguish notation according to the role of the domain in the theoretical framework: for $0<\ell<\sqrt{2}/4$, we denote $\mycirc_\ell:=(-\ell,\ell)^2\subset\Omega$, whereas, for $\ell>1/2$, we set $\square_\ell:=(-\ell,\ell)^2\supset\Omega$.} In Figure\,\ref{fig:circleDomain}, we have pictured the two cases for some parameters $\ell_1$ and $\ell_2$. Furthermore, for $\sqrt{2}/4< \ell < 1/2$ \review{neither inclusion required by} the theory derived in the last section hold and thus, one can not draw any information from the bounds for those values of $\ell$. 
\begin{figure}[!htb]
	\begin{minipage}{1.0\textwidth}
		\centering
		
\includegraphics{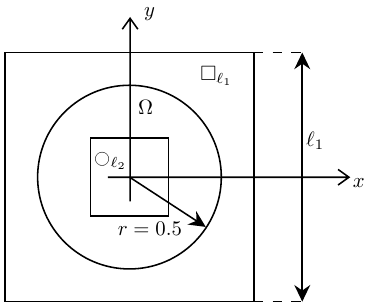}
\end{minipage}
	\caption{The circular domain $\Omega$ with radius $1/2$, an inner square domain denoted by $\mycirc_{\ell_2}$, and an outer square domain $\square_{\ell_1}$, with side lengths $\ell_2$ and $\ell_1$, respectively. Apparently, it holds that $\mycirc_{\ell_2}\subset\Omega\subset\square_{\ell_1}$.}
    \label{fig:circleDomain}
\end{figure}

The underlying PDE for this experiment is Poisson's problem
\begin{equation*}
    - \Delta u = f, \quad \text{in } \Omega
\end{equation*}
with zero boundary conditions and the $L_2$-function
\begin{equation*}
    f(x,y) = 
    \begin{cases}
        \sin(xy) + 10, & \text{if } y < 0.5,\\
        \vert \cos(xy) \vert + 20, & \text{if } y \ge 0.5.
    \end{cases}
\end{equation*}

The standard variational form of the PDE uses the trial and test space $\mathcal{W}\reviewmath{(\Omega)}=\mathcal{Y}\reviewmath{(\Omega)}=H^1_{0}(\Omega)$ yielding a Galerkin discretization and amounts finding  $u \in H^1_{0}(\Omega)$ such that 
\begin{equation*}
    \dual{r_\Omega(u),v}_{H^1_0(\Omega)} := (\nabla u, \nabla v )_{L_2(\Omega)} - (f, v)_{L_2(\Omega)} = 0 \quad \forall v \in H^1_{0}(\Omega).
	\end{equation*}

The approximation is \revKU{chosen} to be a PINN approximation, trained with the loss function $\mathcal{L}(\theta):= \sum_{(x,y) \in \mathcal{S}_\Omega} \vert (\Delta \reviewmath{u^\theta})(x,y) + f(x,y)\vert^2$, where the training set $\mathcal{S}_\Omega \subset \Omega$ consists of $2^{12}$ randomly choosen points. The NN architecture is given by the neuron vector $N = (2, 32, 32, 1)$, i.e. a rather small network. 

\review{
To apply the (penalized) extension construction, we first extend the residual
$r_\Omega(u^\theta)\in (H^1_0(\Omega))'=H^{-1}(\Omega)$ to a functional $\bar{r}_\Omega(u^\theta)\in (H^1(\Omega))'$ by defining
\begin{equation*}
    \langle \bar{r}_\Omega(u^\theta),v\rangle_{H^1(\Omega)}:=
    (\nabla u^\theta,\nabla v)_{\revKU{L_2}(\Omega)}-(f,v)_{\revKU{L_2}(\Omega)}\quad \forall v\in H^1(\Omega).
\end{equation*}
Clearly, it holds that $\bar{r}_\Omega(u^\theta)|_{H^1_0(\Omega)}=r_\Omega(u^\theta)$.}
\review{For $\ell>1/2$, so that $\Omega\subset\square_\ell$}, we define
\begin{equation*}
	\dual{r_{\square_\ell}(\reviewmath{u^\theta}), v }_{H^1_0(\square_\ell)} 
    := \dual{\reviewmath{\bar{r}}_\Omega(\reviewmath{u^\theta}), \reviewmath{v|_\Omega}}_{H^1(\reviewmath{\Omega})}, 
    \forall v \in H^1_{0}(\square_\ell).
\end{equation*}
\review{Then $r_{\square_\ell}(u^\theta)\in (H^1_0(\square_\ell))'= H^{-1}(\square_\ell)=:\mathcal{Z}'(\square_\ell)$, since the restriction map $v\mapsto v|_\Omega$ is continuous from $H^1_0(\square_\ell)$ to $H^1(\Omega)$}
 Furthermore, $r_{\square_\ell}(\reviewmath{u^\theta})$ is an extension of $R'_{\Omega \rightarrow \square_\ell} r_\Omega(\reviewmath{u^\theta})$, because $r_{\square_\ell}(\reviewmath{u^\theta}) \equiv R'_{\Omega \rightarrow \square_\ell} r_\Omega(\reviewmath{u^\theta})$ on $\mathcal{U}(\square_\ell)$, defined in \eqref{eq:UdefSobolev}. \review{Indeed, if $v\in\cU(\square_\ell)$, then $v|_\Omega\in H^1_0(\Omega)$, and therefore
$$
\langle r_{\square_\ell}(u^\theta),v\rangle_{H^1_0(\square_\ell)}
=
\langle r_\Omega(u^\theta),v|_\Omega\rangle_{H^1_0(\Omega)}.
$$}The penalized extension \review{$r_{\square_\ell}^\lambda(u^\theta)$} is \review{then} defined as in \eqref{eq:computablePenResidual}. 

For $\ell < \sqrt{2}/4$ and with the space $\mathcal{V}(\Omega)$ defined in \eqref{eq:VdefSobolev} we define \review{$$\langle r_{\reviewmath{\mycirc}_\ell}(u^\theta),v\rangle_{H^1_0(\mycirc_\ell)}
:=\langle r_\Omega(u^\theta),E_{\mycirc_\ell\to\Omega}v\rangle_{H^1_0(\Omega)},
\quad v\in H^1_0(\mycirc_\ell).$$}%
\review{By construction, both $r_{\square_\ell}(u^\theta)$ and $r_{\square_\ell}^\lambda(u^\theta)$ are extensions of $R'_{\Omega\rightarrow\square_\ell} r_\Omega (u^\theta)$, thus belong to $\mathcal{E}(R'_{\Omega\rightarrow\square_\ell} r_\Omega;H^{-1}(\square_\ell))$.} \review{Hence}, Corollary\,\ref{cor:boundsSobolev} applies and with the unity constants we get 
\begin{equation*}
	\Vert r_{\reviewmath{\mycirc}_\ell}(\reviewmath{u^\theta}) \Vert_{H^{-1}(\reviewmath{\mycirc}_\ell)} 
    	\kern-3pt\overset{\ell < \sqrt{2}/4}{\le} \Vert \nabla u - \nabla \reviewmath{u^\theta} \Vert_{L_2(\Omega)} 
	= \Vert r_{\Omega}(\reviewmath{u^\theta}) \Vert_{H^{-1}(\Omega)}
	\kern-3pt\overset{\ell > 1/2}{\le} \Vert r_{\square_\ell}(\reviewmath{u^\theta}) \Vert_{H^{-1}(\square_\ell)}
\end{equation*}
\review{and also 
\begin{equation*}
	  \Vert r_{\Omega}(\reviewmath{u^\theta}) \Vert_{H^{-1}(\Omega)} \le \Vert r^\lambda_{\square_\ell}(\reviewmath{u^\theta}) \Vert_{H^{-1}(\square_\ell)}.
\end{equation*}}

For all domains, we have chosen a FEM-triangulation with characteristic length $h=5\cdot 10^{-3}$. We show the results of this experiment in Figure \ref{fig:domainsize}.
\begin{figure}[!htb]
	\centering
	\includegraphics{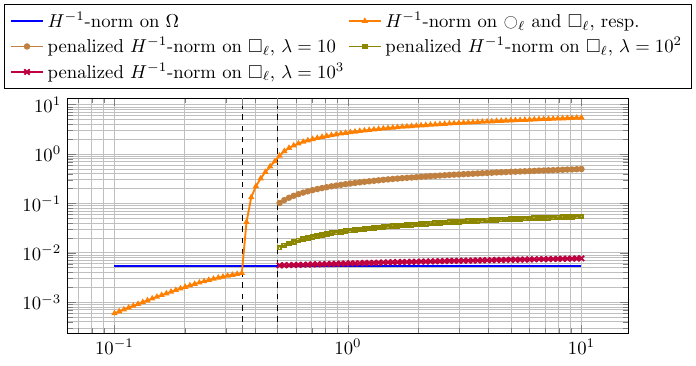}
\caption{\label{fig:domainsize} The $H^{-1}$-norm of the residual on $\Omega$ \review{(horizontal blue line),} \review{and} the \review{corresponding} extended residual \review{norms} on \review{the inner and outer (square) domains of half side length $\ell$.}}
\end{figure}

The plot in Figure\,\ref{fig:domainsize} shows the dependence of the bound on the domain size. The horizontal blue line displays the $H^1_0$-error\footnote{It is equal to the dual norm of the residual on $\Omega$.} of the neural network approximation after training. The vertical dashed lines are the side length $\ell = \sqrt{2}/4$ and $\ell = 0.5$\review{, and indicate the bounds for which $\reviewmath{\mycirc}_\ell \subset \Omega$ and $\Omega \subset \square_\ell$ respectively}, i.e.~the theory from the previous section does not hold in-between those values. The orange curve represents the $H^{-1}$-norm of the extended residual on \review{$\mycirc_\ell$ and }$\square_\ell$. An increasing side length $2\ell$ leads to an increased upper bound for the residual, which is to be expected since the norm is calculated on an increasing domain. However, the dependence is weak for all upper bounds. Also the lower bound deteriorates, when $\ell$ is chosen smaller and smaller. Nevertheless, we can see that the lower bound is sharp for the maximum side length $\ell = \sqrt{2}/4$, for which the lower bound holds. The upper bound without penalization is loose, i.e.~around two orders of magnitudes above the error. Whereas, when we include a penalty parameter $\lambda$, we can see that we can get as close to the true error as we wish. This verifies the theory from above.

In the following, we shall give two applications, for which the bounds would be convenient.

\subsection{Setting for following experiments}
As mentioned in the beginning of this section, possible scenarios of problems solved with NNs include parametric PDEs (PPDEs) and/or PDEs on (spatial) domains with complicated geometries. This also guides our following numerical experiments. Denoting by $\mathcal{P}\subset\mathbb{R}^P$, $P\in\mathbb{N}$, a compact parameter set, we denote by $u^\delta_\mu$ the high-fidelity (but expensive) numerical solution of the PPDE (e.g.\ by finite elements). Then, we trained a NN $\reviewmath{u^\theta}(\cdot;\mu)$ to approximate the solution maps $\mu \mapsto u_\mu$, where $u_\mu$ is the  exact (classical, i.e., pointwise) solution\footnote{Existence and uniqueness of such solutions are assumed under suitable assumptions.} of the PPDE on the domain $\Omega$. For our experiments, we trained the NN using the mean-square loss function
\begin{align*}
	\mathcal{L}(\theta) := \sum\limits_{\mu \in \mathcal{S}_\mathcal{P}} \sum\limits_{x \in \mathcal{S}_\Omega} | u^\delta_\mu(x) - \reviewmath{u^\theta}(x;\mu) |^2,
\end{align*}
where we used finite training data sets $\mathcal{S}_\mathcal{P} \subset \mathcal{P}$ for the parameter and $\mathcal{S}_\Omega \subset \Omega$ for the physical variable (which could also be space and time for parabolic problems). Although our numerical tests use neural networks trained against FE solutions, the proposed certification framework is designed to operate independently of the specific loss and training method — and thus applies equally to, e.g., classical PINNs, variational NNs, and data-driven surrogates. The following experiments should investigate the quantitative behaviour of the error bounds and we have not focused on extensive hyperparameter optimization, i.e. the error of the neural network approximation is not of particular importance here.
\subsection{A parametric domain}
Our next numerical experiment is particularly suited for domain embedding, namely a linear elliptic PDE, posed on a domain $\Omega_\mu$ with parameterized boundary shown in Figure \ref{fig:shapeOptdom}. It is the unit square with a cutout, which cannot be smoothly transformed into a reference domain, due to the sharp corners for angles $\mu > 0$. Such a situation occurs e.g.~in geometry optimization, where PINNs have already been used, \cite{SUN2023116042}. 
\begin{figure}[!htb]
	\begin{minipage}{1.0\textwidth}
		\centering
		
\includegraphics{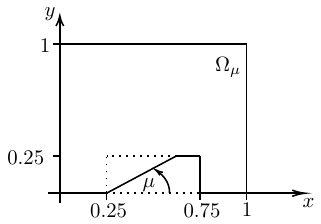}
 	\end{minipage}
	\caption{A parameterized square, where the parameter $\mu$ is the angle of a recess.\label{fig:shapeOptdom}}
\end{figure}
	
When solving a PDE on $\Omega_\mu$ with the finite element method, re-meshing  might be necessary for different $\mu$. On the other hand, the training of a NN, particularly with PINNs, is straightforward by excluding all points outside of the domain. This shows why PINNs might be an attractive method for a PDE on $\Omega_\mu$.

Since the underlying domain is parameterized, we can consider a non-parametric elliptic equation of the form
\begin{equation*}
	\dual{r_{\Omega_\mu}(u_\mu),v}_{H^1_0(\Omega_\mu)} := (A\, \nabla u_\mu, \nabla v )_{L_2(\Omega_\mu)} + (b \cdot \nabla u_\mu + c\, u_\mu, v )_{L_2(\Omega_\mu)} - f_\mu(v) = 0, 
\end{equation*}
for all $v \in H^1_{0}(\Omega_\mu)$ with the diffusion, convection and reaction coefficients given by
\begin{equation*}
	A \equiv \begin{pmatrix}
		1/2 & 1/4 \\
		1/4 & 1/2 
	\end{pmatrix} , \quad 
	b \equiv \begin{pmatrix}
		10 \\
		-3 
	\end{pmatrix}  
	\quad \text{ and } \quad
	c(x,y) := xy+1.
\end{equation*}	
The source function is defined by $f_\mu(v) := (10, v)_{L_2(\Omega_\mu)}$. We choose the training set $\mathcal{S}_{\Omega_\mu}$ as $2^{16}$ random points in $\Omega_\mu$. The parameter training set $\mathcal{S}_\mathcal{P}$ consists of five equidistant points in $\mathcal{P}=[0,\pi/2]$, $P=1$. The extended domain is chosen as $\square:= (0,1)^2$ and the imbedded domain as $\mycirc := (0,1) \times (0.25,1)$.

\review{We first extend $r_{\Omega_\mu}(u^\theta(\mu))\in H^{-1}({\Omega_\mu})$ to
$\bar{r}_{\Omega_\mu}(u^\theta(\mu))\in (H^1({\Omega_\mu}))'$ by setting, for $v\in H^1({\Omega_\mu})$, 
\begin{align*}
\dual{\bar{r}_{\Omega_\mu}(u^\theta(\mu)),v}_{H^1({\Omega_\mu})}:= 
&(A\nabla u^\theta(\mu),\nabla v)_{L_2({\Omega_\mu})}+\\
&+(b\cdot\nabla u^\theta(\mu)+cu^\theta(\mu),v)_{L_2({\Omega_\mu})}
-\dual{f_\mu,v}_{H^1({\Omega_\mu})},
\end{align*}
for which it holds that
$
\bar{r}_{\Omega_\mu}(u^\theta(\mu))|_{H^1_0({\Omega_\mu})}=r_{\Omega_\mu}(u^\theta(\mu)).
$ Then, t}he extension and restriction of the residual can be defined as 
\begin{align*}
	\dual{r_\square(\reviewmath{u^\theta}(\mu)), v }_{H^1_0(\square)} 
    &:= \dual{\reviewmath{\bar{r}}_{\Omega\reviewmath{_\mu}}(\reviewmath{u^\theta}(\mu)), v\reviewmath{|_{\Omega_\mu}}}_{H^1(\reviewmath{\Omega_\mu)}}, 
    &&\forall v \in H^1_{0}(\square), \\
	\dual{ r_\mycirc(\reviewmath{u^\theta}(\mu)), v }_{H^1_0(\mycirc)} 
    &:= \dual{r_{\Omega\reviewmath{_\mu}}(\reviewmath{u^\theta}(\mu)),E_{\reviewmath{\mycirc \rightarrow}{\Omega\reviewmath{_\mu}}} v}_{H^1_0({\Omega\reviewmath{_\mu}})},  
    &&\forall v \in H^1_{0}(\mycirc).
\end{align*}
Again, the penalized residual extension $r^{\lambda}_\square(\reviewmath{u^\theta}(\mu))$ is defined as in \eqref{eq:computablePenResidual}\review{, and $r^{\lambda}_\square(\reviewmath{u^\theta}(\mu))\in \mathcal{E}(R'_{{\Omega_\mu}\rightarrow\square} r_\Omega;H^{-1}(\square))$. Consequently,} Corollary\,\ref{cor:boundsSobolev} applies and yields the error estimate
\begin{equation*}
	\frac{\Vert r_\mycirc(\reviewmath{u^\theta}(\mu)) \Vert_{H^{-1}(\mycirc)}}{\Vert A \Vert_{L_\infty} + \Vert b \Vert_{L_\infty} + \Vert c \Vert_{L_\infty}} 
    \le \Vert \nabla u_\mu - \nabla \reviewmath{u^\theta}(\mu) \Vert_{L_2(\Omega\reviewmath{_\mu})} 
    \le \frac{\Vert r_\square(\reviewmath{u^\theta}(\mu)) \Vert_{H^{-1}(\square)}}{\lambda_{\min}(A)},
\end{equation*}
\review{and also
\begin{equation*}
\Vert \nabla u_\mu - \nabla \reviewmath{u^\theta}(\mu) \Vert_{L_2(\Omega\reviewmath{_\mu})} 
    \le \frac{\Vert r^\lambda_\square(\reviewmath{u^\theta}(\mu)) \Vert_{H^{-1}(\square)}}{\lambda_{\min}(A)}
\end{equation*}
}
from which we deduce the constants.

\begin{figure}[!htb]
	\centering
\includegraphics{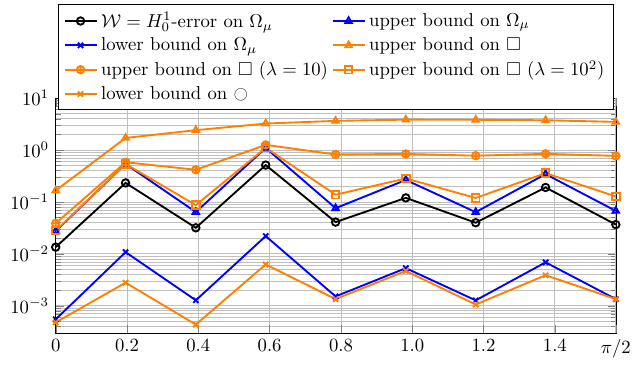}
\caption{\label{fig:param_dependencev2} Absolute $H^1_0$-error as well as the estimated $H^1_0$-error on $\Omega$, $\square$ and $\mycirc$. The horizontal axis corresponds to the angle $\mu$.}
\end{figure}
The results for a set of nine parameters, which serve as the test set for the NN solution, are depicted in Figure \ref{fig:param_dependencev2}. The Riesz bounds on $\Omega_\mu$ (blue) follow the slope of the exact error (black), the upper bound being quite sharp, the lower one too optimistic by a factor of about 10. The lower bound for the error estimator (orange), calculated on $\mycirc$, follows the slope and is quite sharp. The reason might be that the cut-off region does neither concern the Riesz lower bound nor the one on $\mycirc$. On the other hand, however, the upper bound does not follow the slope of the error and is too pessimistic by a factor of 10. The change of the geometry obviously has only a small effect on the proposed upper error bound. The situation changes when we include the penalized upper bound. We expect the penalized upper bound to be more sharp for increasing $\lambda$. This is also the case and as in the previous example, one could choose $\lambda=10^2$ to get a sharp upper bound and still carry out the calculation on the square domain.

\subsection{A parabolic problem on a non-convex polytope}
Finally, we report results for a time-dependent problem on a domain with \enquote{complicated} geometry. To this end, consider the parameterized parabolic problem $\dot{u}_\mu+A_\mu u_\mu = f$, $u_\mu(0)=0$ on $Q:=I\times\Omega$, $I:=(0,1)$ being the time horizon and $\Omega\subset\mathbb{R}^2$ is the map of the state of Arkansas\,(USA) depicted in Figure \ref{fig:domain_arkansas}. The domain is a non-convex polytope and therefore has a Lipschitz-boundary.  Moreover, $\Omega$ has sharp corners on the right- and lower left-hand side. 
\begin{figure}[!htb]
	\begin{minipage}{1\textwidth}
		\centering
		\includegraphics[width=0.5\textwidth]{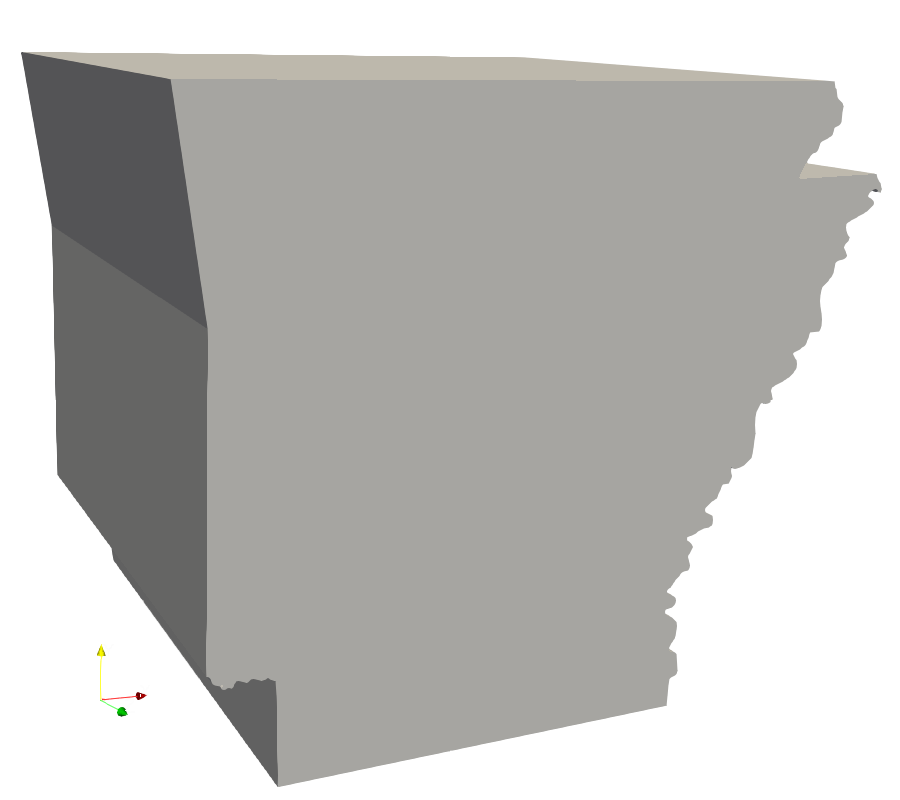}
	\end{minipage}
	\caption{The space-time domain $Q = I\times \Omega$, where the green axis refers to the time.\label{fig:domain_arkansas}}
\end{figure}
The parametric elliptic operators $A_\mu \in \mathcal{L}(H^1_0(\Omega), H^{-1}(\Omega))$ are defined by the variational form 
\begin{equation*}
	\langle A_\mu \varphi, \psi \rangle_{H^1_0(\Omega)} 
    := (K \nabla \varphi, \nabla \psi )_{L_2(\Omega)} + (b_\mu \nabla \varphi + c \varphi, \psi )_{L_2(\Omega)}, \quad \forall \, \varphi, \psi \in H^1_0(\Omega),
\end{equation*}
where the time-independent coefficient functions are chosen as
\begin{equation*}
	K \equiv \begin{pmatrix}
		1 & 0 \\
		0 & 0.1 
	\end{pmatrix} , \quad 
	b_\mu(x,y) := 5\begin{pmatrix}
		 \sin(\mu y)\\
		\cos((x+1)^{\mu/8}) 
	\end{pmatrix} \text{ and } \quad
	c(x,y) := xy+1,
\end{equation*}	
with the parameter set $\mathcal{P}:= [1,10] \subset \mathbb{R}$, $P=1$. The parameter dependent convection is chosen to be \emph{not} affinely decomposable, so that this standard assumption of the reduced basis method is not valid, see e.g.\,\cite{RozzaRB}, and training a NN seems attractive.  We use the space-time variational formulation of the parabolic PDE as introduced in Example \ref{Ex:SpaceTime} above.

The height of Arkansas is normalized to $1$ and the front upper left corner is located at $(0,1,0)^T \in \mathbb{R}^3$, so that we define $\square := (0,1.2) \times (0,1)$ and $\mycirc := (0.1345,0.783) \times  (0,1) \subset \Omega$. With this setting, the (penalized) extension operator from Proposition\,\ref{prop:LisBochner} can be used and Corollary\,\ref{cor:boundsBochner} applies.

The NN has again been trained with a high-fidelity finite-element solution using training sets $\mathcal{S}_Q$, consisting of $1.5 \cdot 10^5$ points for the space-time domain $Q$ and $\mathcal{S}_\mathcal{P}$, consisting of $16$ equidistant points for the parameter. The error has been measured on $31$ parameters. 

\begin{figure}[!htb]
	\centering
\includegraphics{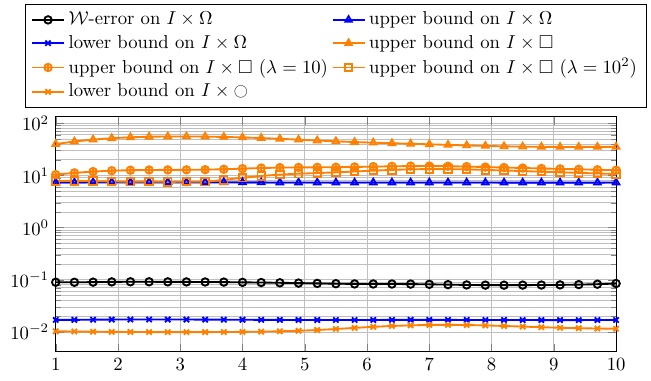}	
\caption{\label{fig:param_dependencev3} Absolute $\mathcal{W}$-error as well as the estimated $\mathcal{W}$-error on $I\times \Omega$, $I\times\square$ and $I\times\mycirc$. The horizontal axis corresponds to the parameter value $\mu \in \mathcal{P}$.}
\end{figure}
The results are depicted in Figure \ref{fig:param_dependencev3}. The error of the NN approximation depends only slightly on the parameter $\mu$. This is also reflected by the bounds, so that they are basically multiples of the true error. The Riesz bounds on $Q=I\times\Omega$ are about a factor 70 off the true error (black). This is due to the constants in the error-residual relation, see \cite[Thm.\ 5.1]{schwab2009space} and the appendix therein. The (penalized) error bounds (orange) are quite sharp for $\lambda=10^2$. Also the lower bound has nearly no offset. Again, the data is smooth on $I\times\mycirc$, so that we can use an efficient spectral method as we do not need to resolve the complicated geometry of $\Omega$.

\subsection{Computational times}
In order to investigate the computational overhead required for the lower and upper bounds, we collect in Table \ref{Tab:CPU} the CPU/GPU times for (i) the NN training, (ii) the evaluation of the NN at the points $\lbrace \mu \rbrace \times \mathcal{S}_\Omega$ for one $\mu \in \mathcal{S}_\mathcal{P}$ and (iii) solving the Riesz representation problems on $\mycirc$ and $\square$ also for one $\mu\in \mathcal{S}_\mathcal{P}$. The times with respect to all $\mu \in \mathcal{S}_\mathcal{P}$ scale linearly. The time measurement has been carried out using standard devices for each task, e.g., a NVIDIA Tesla V100 GPU has been used for the training and the evaluation of the NNs. The termination criterion of the training process has been a maximum number of iterations, which was $5 \cdot 10^3$ in the case of the saw-blade domain and $10^4$ iterations for the parametric and Arkansas domain. The evaluation of the error estimator on $\square$ has been parallelized using two Intel Xeon Gold 6230 CPUs with 20 cores each. For the discretization, we have used standard $P1$-finite elements with around $2.5 \cdot 10^5$ degrees of freedom. The Riesz problems on $\mycirc$ have been solved with the spectral method using a nodal Lagrange basis of order 12 in the 2D case and of order 8 in the time-dependent case. This leads to small dense linear systems, which can be solved using serial direct solvers and the error can be expected to be near machine accuracy. In case of the Arkansas domain the matrix size was $729 \times 729$ and $169 \times 169$ in the stationary cases. Thus, the comparable long solving time of $2.249$ seconds may be due to the non-optimal internal routines of FEniCSx. 

We can see that the time for the error estimation is negligible in comparison to the training time. Moreover, the lower bound on $\mycirc$ can be computed efficiently and this can even be improved if one would use specialized spectral solvers. The numbers confirm the efficiency of the method, even though we did not even use a highly optimized implementation, but the standard algorithms within the FEniCSx implementation.

\begin{table}[htbp]
    \centering
    \begin{tabular}{l|r|r|r|r}
        \multicolumn{3}{c}{} & 
        \multicolumn{2}{|r}{{\textbf{Error estimation}}}\\
         \textbf{Problem} 
         & \textbf{Training} 
         & \textbf{Evaluation}
         & $\mycirc$ & $\square$ \\ \hline
         Parametric domain & 926.76 & 0.0128 & 0.011 & 0.15\\ \hline
         Arkansas & 7536.71 & 0.0720 & 2.249 & 1.77\\ \hline
    \end{tabular}
    \caption{Times (in seconds) for training (GPU), NN evaluation (GPU) and error estimation (CPU) on $\mycirc$ and $\square$.}
    \label{Tab:CPU}
\end{table}

 \section{Summary and outlook}
In this work, we introduced a novel framework for rigorously certifying the accuracy of \revKU{any given approximation to the solution of a PDE} through a posteriori error estimation.~\revKU{The approximation is seen as a black box determined e.g.\ by a commercial code or a} neural network (NN). The proposed method constructs efficient, computable error bounds in the natural norm imposed by well-posedness of the PDE formulation, under minimal regularity assumptions, is applicable to complex geometries, and is independent of the training process and the choice of loss function.

We derived both upper and lower error bounds that were shown to be reasonably sharp while enhancing computational efficiency. This is achieved by replacing the potentially complex PDE domain by a simpler reference domain and by establishing relationships between functionals defined w.r.t.~the corresponding spaces on the embedded domains. These relationships yield the desirable bounds, accessed by the evaluation of dual norm estimates using Riesz representation as an alternative to using wavelet methods as introduced in \cite{ernst2024certified}. 

A central difficulty was the construction of norm-preserving extensions of residuals. Although the existence of such extensions is guaranteed by the Hahn–Banach theorem, they are not computable in general. In a Hilbert space setting, this difficulty can be addressed by exploiting the Riesz representation. We introduced penalized extensions that can be realized through the solution of elliptic problems on the simple enveloping domain. We proved that these extensions converge to the optimal norm-preserving extension as the penalty parameter tends to infinity.
 
Our numerical experiments validate the proposed certification framework for parameter-dependent linear elliptic and time-dependent parabolic PDEs. The numerical investigation of the box size and the sharpness of the upper bound shows a weak dependence. In the case of an elliptic PDE with a parameterized boundary, our framework efficiently certifies \revKU{black box} approximations without requiring a re-meshing for each parameter. Similarly, for a linear parabolic equation in a simultaneous space-time variational formulation, our method successfully extends to Bochner spaces. 

Certifying solutions that only weakly or approximately satisfy boundary data would require a generalization of the underlying functional setting (extension and restriction construction) beyond $H_0^1(\Omega)$. This is an interesting direction for future research and would allow extending the certification framework to more general PINN architectures or learning settings where strict enforcement of boundary conditions is not guaranteed.

Our numerical experiments employ penalized extensions of residual functionals, which provide a computable surrogate of optimal norm-preserving extensions. Moreover, the extension operator used here is optimal for $\mathcal{Y}\revKU{(\Omega)} = L_2\revKU{(\Omega)}$, motivating the use of a first-order system formulation. This ensures that the residual remains in $L_2(\Omega)$, improving even further the accuracy of the certification. Ongoing work includes adaptive wavelet-based approximations in $W_0^{m,p}(\square)$, which allow for error certification in Banach spaces beyond Hilbert settings. A key direction for our future research is to extend the methodology to nonlinear PDEs, where error certification is more challenging but crucial for practical applications.

\begin{appendix}

\section{NNs for solving PDEs} \label{sec:PINNs}
\revKU{Our above framework was motivated by the aim to certify} the solution of a partial differential equation (PDE) using neural networks. Thus we briefly introduce the main concept of solving PDEs using neural networks without going into details, referring to classical PINNs only as an example to fix ideas.

\subsection{PDEs in classical form}
Let $\Omega \subset \mathbb{R}^d$ be a bounded open domain and let $m\in\mathbb{N}$ denote the order of the PDE ($m=2$ for Laplace's equation). We denote by
\begin{align*}
    B^\circ: C^m(\Omega)\to C^0(\Omega)
\end{align*}
the classical (point-wise) form of the differential operator under consideration. Then, given some $f^\circ\in C(\Omega)$, we call $u\in C^m(\Omega)$ a \emph{classical solution} if 
\begin{equation} \label{eq:clPDE}
	(B^\circ u)(x) = f^\circ(x), \quad \forall x \in \Omega,
\end{equation}
where we assume that proper boundary and/or initial conditions are incorporated into the definition of the operator. 
Next, we are given some approximation $u^\delta$ to $u$, e.g.\ in terms of a neural network, and define the (classical) \emph{residual} of \eqref{eq:clPDE} by
\begin{align}\label{eq:residual}
    r_\Omega^\circ (u^\delta)(x) := \revKU{f^\circ}(x) - (B^\circ u^\delta)(x), \quad x\in \Omega. 
\end{align}
Given $u^\delta$, the residual is in principle computable by inserting $u^\delta$ into the PDE operator. However, such classical solutions often do not exist, depending on the data $B^\circ$, $f^\circ$ and $\Omega$. It is well-known that well-posedness of such problems is usually linked to suitable variational formulations as described in Section \ref{sec:VarFormPDEs}.

\subsection{Neural Network Approximation}
Neural networks seem suitable for solving PDEs because they are universal function approximators, see \cite{Hornik1991,Hornik1989,Guehring2019}. In the following we give an example of a neural network. 
\subsubsection*{Neural networks (NNs)} 
The notation of NNs in this paragraph is based upon \cite{Berner2021,Gribonval2021,Petersen2018}. A NN is a function $\Phi_a(\cdot;\theta):\mathbb{R}^{N_{0}} \rightarrow \mathbb{R}^{N_{L}}$, where $a$ is the \emph{architecture} and $\theta$ are the \emph{parameters}. Both the architecture and parameters determine the input-output function $\Phi_a(\cdot;\theta)$ of the NN. In case of a feed-forward NN, the architecture $a=(N,\rho)$ can be described by the vector of neurons per layer $N=(N_0,...,N_L)\in\mathbb{N}^{L+1}$, where $L \in \mathbb{N}$ denotes the number of \emph{layers} ($N_0$ being the input and $N_L$ the output dimension) and the \emph{activation function} $\rho: \mathbb{R} \rightarrow \mathbb{R}$.

The parameters of the NN read $\theta = (W^{(l)},b^{(l)})_{l=1,...,L}$, where $W^{(l)} \in \mathbb{R}^{N_{l} \times N_{l-1}}$ are the \emph{weight matrices} and $b^{(l)} \in \mathbb{R}^{N_{l}}$ are called \emph{bias vectors}. 
The output $\Phi_a(z;\theta)$ of the NN for an input $z\in\mathbb{R}^{N_0}$ is then defined as $\Phi_{a}(z;\theta) := \Phi^{(L)}(z;\theta)$, where
\begin{align*}
	\Phi^{(1)}(z;\theta) &= W^{(1)} z + b^{(1)}, \\
	\hat{\Phi}^{(l)}(z;\theta) &= \rho(\Phi^{(l)}(z;\theta)), \quad l=1,...,L-1, \quad \text{and} \\
	\Phi^{(l+1)}(z;\theta) &= W^{(l+1)} \hat{\Phi}^{(l)}(z;\theta) + b^{(l+1)}, \quad l = 1,...,L-1,
\end{align*}
and $\rho$ is applied component-wise. In the following the architecture is omitted in the notation as we view it as being fixed once and for all.\smallskip\\ 

For a neural network $\reviewmath{u^\theta} := \Phi(\cdot;\theta)$ to approximate the solution of a PDE, the parameters $\theta$ must be determined, which is done in the training phase. Thereby, a tailored minimization problem is defined, with which the parameters are \emph{learned}.

\subsubsection*{Example: Classical PINNs.}
The core idea of PINNs is to use the residual for the definition of a loss function within the training of a neural network. Let us briefly describe this for the above classical solution concept, even though (i) there are several other approaches in the literature and (ii) our error analysis does not depend on the choice of the loss function.

In the regime of PINNs the function to be minimized involves the PDE (e.g.\ in classical form \eqref{eq:clPDE}), which is the reason for the name \emph{physics-informed}. With respect to the classical form of the PDE, a set of sample points $\mathcal{S}_\Omega$ in $\Omega$ are chosen and \eqref{eq:clPDE} is posed only for those points. This leads to the definition of the loss function
\begin{equation}\label{eq:lossfunc}
	\mathcal{L}(\theta) 
    := {\frac{| \Omega |}{| \mathcal{S}_\Omega|}} \sum_{x \in \mathcal{S}_\Omega} 
    \vert (r^\circ_\Omega (\reviewmath{u^\theta}))(x) \vert^2 {\approx \int\limits_{\Omega} \vert (r^\circ_\Omega (\reviewmath{u^\theta}))(x) \vert^2 \, dx},
\end{equation}
\revKU{where the latter  approximation is valid as long as the residual is square integrable.} 
Often, an additional sampling is required for satisfying the boundary conditions. We shall assume that $\reviewmath{u^\theta}$ satisfies given boundary conditions as this can be achieved with the aid of approximate distance functions, see \cite{sukumar2022exact}. Other approaches tackling this issue include \cite{makri,makri1}.

Such classical PINNs have e.g.\ been investigated for a broad scope of linear and nonlinear PDEs, see e.g. \cite{Berg2018,Raissi2019,Mao2020,Cai2021,Cai2021a,Hu2024}. The main advantages of the method are its straightforward applicability and, due to the sampling of $\mathcal{S}_\Omega$, it results in a mesh-free approximation.

Variants such as VPINNs, Deep Ritz, and Deep Galerkin methods use alternative loss constructions, often based on variational formulations.

\subsubsection*{Objective: Certification of NN approximations}
Now, let $u^\theta$ denote an approximate solution parameterized by a neural network with weights $\theta \in \mathbb{R}^N$. Our framework places \emph{no restriction} on how $u^\theta$ is obtained. It may arise from classical PINNs, VPINNs, Deep Ritz methods, Deep Galerkin methods, pre-trained surrogate models, or any other black-box neural network approximator. We refer to all such approximations under the umbrella term \emph{neural network approximations}.

Given a NN approximation $\review{u^\theta}$, our goal is to certify the approximation quality of $\review{u^\theta}$ a posteriori, i.e., to bound the error $\|u - \review{u^\theta}\|$ from above and below, independently of the  loss functional or training process.

\end{appendix}

\subsubsection*{Acknowledgement} The authors acknowledge support by the state of Baden-W\"urt\-tem\-berg through bwHPC, and the Hellenic Foundation for Research and Innovation (H.F.R.I.) under the \enquote{2nd Call for H.F.R.I.~Research Projects to support Post-Doctoral Researchers} (project number: $01247$). 

\bibliographystyle{ieeetr}


\end{document}